\newtheorem*{notn}{Notation}
\newtheorem{thm}{Theorem}[section]
\newtheorem{cor}[thm]{Corollary}
\newtheorem{Def}[thm]{Definition}
\newtheorem{prop}[thm]{Proposition}
\newtheorem{rem}[thm]{Remark}
\newtheorem{ex}[thm]{Example}
\newcommand{\bdfn}{\begin{Def} \rm}
\newcommand{\edfn}{\end{Def}}
\newcommand{\tfae}{the following are equivalent}
\newcommand{\ra}{\rightarrow}
\newcommand{\Ra}{\Rightarrow}
\newcommand{\Lra}{\Leftrightarrow}
\newcommand{\es}{\emptyset}
\newcommand{\vp}{\varphi}
\newcommand{\ci}{\subseteq}
\newcommand{\al}{\alpha}
\newcommand{\be}{\beta}
\newcommand{\de}{\delta}
\newcommand{\e}{\varepsilon}
\newcommand{\De}{\Delta}
\newcommand{\si}{\sigma}
\newcommand{\ga}{\gamma}
\newcommand{\La}{\Lambda}
\newcommand{\mb}{\mathbb}
\newcommand{\mc}{\mathcal}
\newcommand{\tr}{\textrm}
\newcommand{\mf}{\mathfrak}
\newcommand{\cc}{\circ}
\newcommand{\mr}{\mathscr}
\newcommand{\sm}{\setminus}
\newcommand{\Si}{\Sigma}
\newcommand{\iy}{\infty}
\newcommand{\Om}{\Omega}
\newcommand{\TFAE}{The following are equivalent}
\newcommand{\trC}{\textrm{Cent}}
\newcommand{\trr}{\textrm{rad}}
\newcommand{\emC}{\emph{Cent}}
\newcommand{\emr}{\emph{rad}}
\newcommand{\ercp}{\bf{\emph{rcp}}}
\newcommand{\eSACP}{\bf{\emph{SACP}}}
\newcommand{\beqa}{\begin{eqnarray*}}
\newcommand{\eeqa}{\end{eqnarray*}}
\newcounter{cnt1}
\newcounter{cnt2}
\newcounter{cnt3}
\newcounter{cnt4}
\newcommand{\blr}{\begin{list}{$($\roman{cnt1}$)$} {\usecounter{cnt1}
\setlength{\topsep}{0pt} \setlength{\itemsep}{0pt}}}
\newcommand{\blR}{\begin{list}{\Roman{cnt4}.\ } {\usecounter{cnt4}
\setlength{\topsep}{0pt} \setlength{\itemsep}{0pt}}}
\newcommand{\bla}{\begin{list}{$(\alph{cnt2})$} {\usecounter{cnt2}
\setlength{\topsep}{0pt} \setlength{\itemsep}{0pt}}}
\newcommand{\bln}{\begin{list}{$($\arabic{cnt3}$)$} {\usecounter{cnt3}
\setlength{\topsep}{0pt} \setlength{\itemsep}{0pt}}}
\newcommand{\el}{\end{list}}
\begin{document}
\title[$\mr{F}$-simultaneous approximative $\tau$-compactness in Banach spaces]{A study on $\mr{F}$-simultaneous approximative $\tau$-compactness property in Banach spaces}
\author[Das]{Syamantak Das}
\author[Paul]{Tanmoy Paul}
\address{Dept. of Mathematics\\
Indian Institute of Technology, Hyderabad\\
India}
\email{ma20resch11006@iith.ac.in \& tanmoy@math.iith.ac.in}
\subjclass[2000]{Primary 41A28, 46B20 Secondary 41A65, 41A50 \hfill \textbf{\today} }
\keywords{Simultaneous approximative compactness, Chebyshev center, Fr\'echet smoothness, strict convexity, reflexivity}
\begin{abstract}
Vesel\'y  (1997) studied Banach spaces that admit $f$-centers for finite subsets of the space. In this work, we introduce the concept of $\mr{F}$-simultaneous approximative $\tau$-compactness property ($\tau$-$\mr{F}$-SACP in short) for triplets $(X, V,\mf{F})$, where $X$ is a Banach space, $V$ is a $\tau$-closed subset of $X$, $\mf{F}$ is a subfamily of closed and bounded subsets of $X$, $\mr{F}$ is a collection of functions, and $\tau$ is the norm or weak topology on $X$.  We characterize reflexive spaces with the Kadec-Klee property using triplets with $\tau$-$\mr{F}$-SACP.  We investigate the relationship between $\tau$-$\mr{F}$-SACP and the continuity properties of the restricted $f$-center map. The study further examines  $\tau$-$\mr{F}$-SACP in the context of $CLUR$ spaces and explores various characterizations of $\tau$-$\mr{F}$-SACP, including connections to reflexivity, Fr\'echet smoothness, and the Kadec-Klee property. 
\end{abstract}
\maketitle
\section{Introduction}

\subsection{Prerequisites:~}
We list some standard notations used in this study: $X$ denotes a Banach space, and by a subspace $Y$ of $X$, we refer to a closed linear subspace. $B_X$ and $S_X$ represent the closed unit ball and unit sphere of $X$, respectively. $\mc{F}(X),~\mc{CC}(X)$ and $\mc{CB}(X)$ denote the collection of all nonempty finite, closed and convex, closed and bounded subsets of $X$, respectively. For $F\in\mc{F}(X)$, $card(F)$ denotes the cardinality of the set $F$. For $f\in S_{X^*}$, we define $J_X(f)=\{x\in S_X:f(x)=1\}$. $NA(X)$ denotes the set of all $f\in S_{X^*}$ such that $f$ attains its norm on $B_X$. The real line is assumed to correspond to the scalar field of the space. In this study, $\tau$ denotes the norm or weak topology on $X$. For $F\in\mc{CB}(X)$,  we define a subclass $\ell_\iy^+(F)$ by $\ell_\iy^+(F)=\{\vp:F\ra\mb{R}_{\geq0}:\underset{t\in F}{\sup}~\vp(t)<\iy\}$. On the set $\ell_\iy^+(F)$, we consider a coordinate-wise ordering defined as: for $\vp_1,\vp_2\in\ell_\iy^+(F)$, $\vp_1\leq\vp_2$ if and only if $\vp_1(t)\leq\vp_2(t)$ for all $t\in F$.  
\bdfn
For $F\in\mc{CB}(X)$ and a function $f:\ell_\iy^+(F)\ra\mb{R}_{\geq 0}$, we state that
\bla
\item $f$ is {\it monotone} if for $\vp_1,\vp_2\in\ell_\iy^+(F)$, $\vp_1\leq\vp_2$ implies $f(\vp_1)\leq f(\vp_2)$.
\item $f$ is {\it weakly strictly monotone} if $f$ is monotone and for $\vp_1,\vp_2\in\ell_\iy^+(F)$, $\vp_1(t)<\vp_2(t)$ for all $t\in F$ implies $f(\vp_1)<f(\vp_2)$.
\item $f$ is {\it coercive} if $f(\vp)\ra\iy$ as $\|\vp\|_\iy\ra\iy$.
\el
\edfn

For $\mf{F}\ci\mc{CB}(X)$, we define the following.
\begin{notn}\quad
\bln\item[\rm(1)] $\mr{F}_{cm}$ denotes the collection of all convex monotone functions $f:\ell_\iy^+(F)\ra\mb{R}_{\geq0}$ for all $F\in\mf{F}$.
\item[\rm(2)] $\mr{F}_{cwm}$ denotes the collection of all convex weakly strictly  monotone functions $f:\ell_\iy^+(F)\ra\mb{R}_{\geq0}$ for all $F\in\mf{F}$.
\item[\rm(3)] $\mr{F}_{cmc}$ denotes the class of all convex monotone and coercive functions $f:\ell_\iy^+(F)\ra\mb{R}_{\geq0}$ for all $F\in\mf{F}$. 
\item[\rm(4)] $\mr{F}_{cwmc}$ denotes the collection of all convex weakly strictly monotone coercive functions $f:\ell_\iy^+(F)\ra\mb{R}_{\geq0}$ for all $F\in\mf{F}$. 
\el
\end{notn}
For $x\in X$ and a $\tau$-closed subset $V$ of $X$, we define $d(x,V)=\underset{v\in V}{\inf}\|x-v\|$ and $P_V(x)=\{v\in V:\|x-v\|=d(x,V)\}$. We note that $P_V(x)$ may be empty. $V$ is said to be {\it proximinal} ({\it Chebyshev}) in $X$ if $P_V(x)$ is nonempty (singleton) for all $x\in X$.

Let us recall the definitions of $\tau$-strong proximinality and approximative $\tau$-compactness from \cite{PB}.
\bdfn
\bla
\item A $\tau$-closed subset $V$ of $X$ is said to be {\it $\tau$-strongly proximinal} in $X$ if for all $x\in X$, $P_V(x)\neq\es$ and  for a $\tau$-neighbourhood $W$ of 0, there exists $\de>0$ such that $P_V(x,\de)\ci P_V(x)+W$, where $P_V(x,\delta)=\{v\in
V:\|v-x\|<d(x,V)+\delta\}$.
\item A $\tau$-closed subset $V$ of $X$ is said to be {\it approximatively $\tau$-compact} in $X$, if for $x\in X$ and a sequence $(v_n)\ci V$ such that $\|x-v_n\|\ra d(x,V)$ implies $(v_n)$ has a $\tau$-convergent subsequence.
\el
\edfn
For $x\in X$, a $\tau$-closed subset $V$of $X$, $F\in\mc{CB}(X)$, and a function $f:\ell_\iy^+(F)\ra\mb{R}_{\geq0}$, we define the following.
\begin{notn}\quad
\bln\item[\rm(1)] $r_f(x,F) = f((\|x-a\|)_{a\in F})$.
\item[\rm(2)] $\emph{rad}_V^f(F) = \underset{v\in V}{\inf}r_f(v,F)$.
\item[\rm(3)] $\emph{Cent}_V^f(F) = \{v\in V:r_f(v,F)=\emph{rad}_V^f(F)\}$.
\item[\rm(4)] $\de-\emph{Cent}_V^f(F) = \{v\in V:r_f(v,F)\leq \emph{rad}_V^f(F)+\de\}$.

\el
\end{notn}
However for $V\in\mc{CC}(X)$, $F\in\mc{CB}(X)$ and a function $f:\ell_\iy^+(F)\ra\mb{R}_{\geq0}$, $\trC_V^f(F)$ may be empty, but for any $\de>0$, $\de-\trC_V^f(F)$ is always non-empty.

For $V\in\mc{CC}(X)$ and $F\in\mc{CB}(X)$, when $f:\ell_\iy^+(F)\ra\mb{R}_{\geq0}$ is of the form $f(a)=\underset{t\in F}{\sup}\rho(t)a(t)$, where $\rho=(\rho(t))_{t\in F}\in\ell_\iy^+(F)$, then the quantities $r_f(x,F)$, $\trr_V^f(F)$ and $\trC_V^f(F)$ are denoted by $r_\rho(x,F)$, $\trr_V^\rho(F)$ and $\trC_V^\rho(F)$ respectively. When $\rho(t)=1$ for all $t\in F$, then the above quantites are denoted by $r(x,F)$, $\trr_V(F)$ and $\trC_V(F)$ respectively, the so-called farthest distance from $x$ to $F$, the restricted Chebyshev radius of $F$ in $V$ and the restricted Chebyshev center of $F$ in $V$ respectively. We refer to \cite{DP,ST2,ST}, a few previous works of the authors in this line of investigation.

\bdfn\label{D1}
Let $V$ be a $\tau$-closed subset of $X$, $\mf{F}\ci\mc{CB}(X)$ and $\mr{F}$ be a collection of functions $f:\ell_\iy^+(F)\ra\mb{R}_{\geq0}$ for all $F\in\mf{F}$.
\bla
\item  The triplet $(X,V,\mf{F})$ is said to have {\it restricted $\mr{F}$-center property} ($\mr{F}$-rcp in short) if for all $F\in\mf{F}$ and functions $f:\ell_\iy^+(F)\ra\mb{R}_{\geq0}$, $\trC_V^f(F)\neq\es$. 

\item The triplet $(X,V,\mf{F})$ is said to have {\it $\tau$-$\mr{F}$-property-$(P_1)$} if, for all $F\in \mf{F}$ and all functions $f:\ell_\iy^+(F)\ra\mb{R}_{\geq0}$ in $\mr{F}$, $\trC_V^f(F)\neq\es$ and for all $\tau$-neighborhoods $W$ of $0$, there exists $\de>0$ such that $\de-\tr{Cent}_V^f(F)\ci\tr{Cent}_V^f(F)+W$.
\item The triplet $(X,V,\mf{F})$ is said to have {\it $\mr{F}$-simultaneous approximative $\tau$-compactness property} ($\tau$-$\mr{F}$-SACP in short) if, for all $F\in\mf{F}$, all functions $f:\ell_\iy^+(F)\ra\mb{R}_{\geq0}$ in $\mr{F}$ and any sequence $(v_n)\ci V$ such that $r_f(v_n,F)\ra \textrm{rad}_V^f(F)$ implies that $(v_n)$ has a $\tau$-convergent subsequence.
\item The triplet $(X,V,\mf{F})$ is said to have  {\it weighted simultaneous approximative $\tau$-compactness property} (weighted-$\tau$-SACP in short) if, for all $F\in\mf{F}$, all bounded weights $\rho=(\rho(t))_{t\in F}$, and any sequence $(v_n)\ci V$ such that $r_\rho(v_n,F)\ra \textrm{rad}_V^\rho(F)$ implies that $(v_n)$ has a $\tau$-convergent subsequence.
\el
\edfn
When $\tau$ is the norm topology on $X$, we simply say that $(X,V,\mf{F})$ has $\mr{F}$-property-$(P_1)$ (for $(b)$), $\mr{F}$-SACP ( for $(c)$) or weighted-SACP (for $(d)$).

It is clear from definition \ref{D1} that, if $\mc{F}_s$ is the collection of all singletons in $X$ and $f$ is the identity map on $\mb{R}_{\geq0}$, then a $\tau$-closed subset $V$ of $X$ is approximatively $\tau$-compact ($\tau$-strongly proximinal) in $X$ if and only if $(X,V,\mc{F}_s)$ has $\tau$-$\{f\}$-SACP ($\tau$-$\{f\}$-property-$(P_1)$).

Let us observe that for a $\tau$-closed subset $V$ of $X$, if the triplet $(X,V,\mc{F}(X))$ has $\tau$-$\mr{F}_{cm}$-SACP, then from \cite[Proposition~1.4 (i), (iv)]{LV}, it follows that $(X,V\mc{F}(X))$ has $\mr{F}_{cm}$-rcp.

It has been observed in \cite[Exmaple~2.1]{PB} that $c_0$ is not approximatively $\tau$-compact in $\ell_\iy$, however, it is well-known that $c_0$ is strongly proximinal in $\ell_\iy$. Thus, when $\mc{F}_s$ is the collection of all singletons in $\ell_\iy$ and $f$ is the identity map on $\mb{R}_{\geq0}$, the triplet $(\ell_\iy,c_0,\mc{F}_s)$ has $\{f\}$-property-$(P_1)$ but does not have $\tau$-$\{f\}$-SACP.

We refer to example \ref{E3} for an example of a subspace $Y$ of $X$ such that $Y$ is approximatively compact in $X$ but $(X,Y,\mc{F}(X))$ does not have $\tau$-$\mr{F}$-SACP.

Let $(\Om,\Si,\mu)$ be a finite measure space. For $1\leq p<\iy$, $L_p(\Si,X)$ denotes the space of all $\mu$-strongly measurable $X$-valued $p$-Bochner integrable functions $f:\Om\ra X$ with $\|f\|_p:=\left(\int_\Om\|f(t)\|^pd\mu(t)\right)^{\frac{1}{p}}$. 

Let us recall the notions of $L$-projections, $L$-summands, and $L$-embedded spaces from \cite{H}.
\bdfn
\bla
\item A linear projection $P:X\ra X$ is said to be an {\it $L$-projection} if 
$\|x\|=\|Px\|+\|x-Px\|$ for all $x\in X$.

\item A subspace $Y$ of $X$ is said to be an {\it $L$-summand} if $Y$ is the range of an $L$-projection.

\item $X$ is said to be {\it $L$-embedded} if $X$, under its canonical image in $X^{**}$, is the range of an $L$-projection on $X^{**}$. 
\el
\edfn
\cite[Ch. 4]{H} serves as a reference for examples and other properties of $L$-embedded spaces. It is shown in \cite[Theorem IV.1.5]{H} that, if $Y$ is the range of a norm-1 projection of an $L$-embedded space $X$, $Y$ is also $L$-embedded.

\bdfn\label{D2}
\bla 
\item \cite{LP}
$X$ is said to be {\it $\tau$-compactly locally uniformly rotund} ($\tau-CLUR$ in short) if, for every  $x\in S_X$ and every sequence $(x_n)\ci B_X$ satisfying $\|\frac{x_n+x}{2}\|\ra1$, $(x_n)$ has a $\tau$-convergent subsequence.
\item\cite{PB1}
$X$ is said to be {\it $\tau$-almost locally uniformly rotund} ($\tau-ALUR$ in short) if, for every $x,x_n\in S_X$ and $h_m\in S_{X^*}$ such that $\lim_m\lim_nh_m(\frac{x_n+x}{2})=1$, $x_n\xrightarrow{\tau}x$.
\item $X$ is said to have the {\it Kadec-Klee property} if $x,x_n\in S_X$ with $x_n\xrightarrow{w}x$ implies $x_n\ra x$ in the norm.
\el
It follows from \cite[Proposition 4.4 and Corollary 4.6]{PB1} that $X$ is $\tau-ALUR$ if and only if for every $f\in NA(X)$ and $(x_n)\ci B_X$ such that $f(x_n)\ra 1$ implies $(x_n)$ is $\tau$-convergent.
\edfn 
Let us recall property $(**)$ from \cite{NP} which later became known as the Namioka-Phelps property.
\bdfn
$X$ is said to have the {\it Namioka-Phelps} property if $w^*$  and norm convergence coincide on $S_{X^*}$.
\edfn
Let us recall the following definitions from \cite{Mg}.
\bdfn
Let $X$ be a Banach space.
\bla
\item The norm of $X$ is said to be {\it differentiable} at $x\in S_X$ if $\underset{t\ra0}{\lim}\frac{\|x+ty\|-1}{t}$ exists for $y\in X$. 
\item The norm of $X$ is said to be {\it Fr{\'e}chet differentiable} at $x\in S_X$ if the limit in $(a)$ exists and the convergence is uniform over $y\in S_X$.
\el
\edfn
A Banach space $X$ is said to be {\it smooth} ({\it Fr{\'e}chet smooth}) if the norm is differentiable (Fr{\'e}chet differentiable) at each $x\in S_X$.

\subsection{Observations:~}  In section~2, we study the relationship between $\tau$-$\mr{F}$-property-$(P_1)$ and $\tau$-$\mr{F}$-SACP for a triplet $(X,V,\mf{F})$. We study reflexivity and the Kadec-Klee property in relation to $\tau$-$\mr{F}$-SACP for a triplet $(X,V,\mf{F})$. In section~3, we derive that Fr\'echet smooth spaces are precisely those spaces in which for every $w^*$-closed convex subset $V$ of $X^*$, $(X^*,V,\mc{F}(X^*))$ has $\mr{F}$-SACP, where $\mr{F}$ is the collection of monotone smooth norms and $\trC_V^\ga(F)$ is singleton for all $F\in\mc{F}(X^*)$, and for all monotone norms $\ga:\mb{R}^N\ra\mb{R}_{\geq0}$ where $card(F)=N$. We further investigate several rotundity properties in relation to $\tau$-$\mr{F}$-SACP. 

In section~4, we study the continuity properties of certain set-valued mappings.

\section{Various characterizations of $\tau$-$\mr{F}$-SACP}\hfill

Let us recall that a norm $\ga$ on $\mb{R}^3$ is {\it symmetric} if $\ga=\ga\circ\pi$ for every permutation $\pi$ of the three variables.
\begin{ex}\label{E3}
Let $Y$ be a non-reflexive Banach space. Then from \cite[Theorem~2.4]{LV}, there exists a three-point set $F\ci X$ such that for any monotone symmetric norm $\ga:\mb{R}^3\ra\mb{R}_{\geq0}$, $Y$ admits an equivalent norm $|||~.~|||$ such that $\emC_Y^\ga(F)=\es$ in $(Y,|||~.~|||)$. Let $Y'$ denote the Banach space $Y$ equipped with the norm $|||~.~|||$, and let $X=Y'\oplus_{\ell_1}\mb{R}$. It is evident that $Y'$, being an $L$-summand, is approximatively compact in $X$. However, the triplet $(X,Y',\mc{F}(X))$ does not have $\tau$-$\mr{F}_{cm}$-$\eSACP$ as $\emC_{Y'}^\ga(F)=\es$. 
\end{ex}

The following result demonstrates the relationship between $\tau$-$\mr{F}_{cm}$-property-$(P_1)$ and $\tau$-$\mr{F}_{cm}$-SACP for a triplet $(X,V,\mc{A})$, where $V$ is a $\tau$-closed subset of $X$ and $\mc{A}\ci\mc{F}(X)$.

\begin{thm}\label{T2}
Let $V$ be a $\tau$-closed subset of $X$ and $\mc{A}\ci \mc{F}(X)$. Then, $(X,V,\mc{A})$ has $\tau$-$\mr{F}_{cm}$-$\eSACP$ if and only if $(X,V,\mc{A})$ has the $\tau$-$\mr{F}_{cm}$-property-$(P_1)$ and $\emC_V^f(F)$ is $\tau$-compact for all $F\in \mc{A}$ and $f:\ell_\iy^+(F)\ra\mb{R}_{\geq0}$ in $\mr{F}_{cm}$.
\end{thm}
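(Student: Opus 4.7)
The plan is to prove both implications separately, handling the two topologies $\tau$ in parallel.

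For the forward direction, assume $(X,V,\mc{A})$ has $\tau$-$\mr{F}_{cm}$-$\eSACP$ and fix $F\in\mc{A}$ and $f\in\mr{F}_{cm}$. First I would verify $\trC_V^f(F)\neq\es$: apply SACP to a minimizing sequence $(v_n)\ci V$, extract $v_{n_k}\xrightarrow{\tau}v\in V$, and use the $\tau$-lower semicontinuity of $v\mapsto r_f(v,F)$ to conclude $r_f(v,F)\leq\trr_V^f(F)$; this lower semicontinuity follows from $\tau$-lower semicontinuity of each $\|\cdot-a\|$ combined with monotonicity and continuity of the convex monotone $f$ on $\mb{R}_{\geq0}^N$ where $N=card(F)$. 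The same argument shows $\trC_V^f(F)$ is $\tau$-sequentially closed: any sequence in it is minimizing, SACP extracts a $\tau$-convergent subsequence, and its limit lies again in $\trC_V^f(F)$. For $\tau$ the norm topology this gives norm compactness directly; for $\tau$ the weak topology, $\trC_V^f(F)$ is norm bounded since minimizing sequences are $\tau$-convergent and hence bounded, so weak sequential compactness combined with Eberlein-Smulian yields weak compactness. Finally, if property-$(P_1)$ failed for some $\tau$-neighborhood $W$ of $0$, I would choose $v_n\in((1/n)-\trC_V^f(F))\sm(\trC_V^f(F)+W)$; this is minimizing, so SACP extracts $v_{n_k}\xrightarrow{\tau}v\in\trC_V^f(F)$, forcing $v_{n_k}$ eventually into $v+W\ci\trC_V^f(F)+W$, a contradiction.

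For the reverse direction, assume property-$(P_1)$ and $\tau$-compactness of $C:=\trC_V^f(F)$. Let $(v_n)\ci V$ be a minimizing sequence. Property-$(P_1)$ ensures that for every $\tau$-neighborhood $W$ of $0$, eventually $v_n\in C+W$. When $\tau$ is the norm topology, taking $W=B(0,1/m)$ yields $d(v_n,C)\to0$; selecting $c_n\in C$ with $\|v_n-c_n\|\to0$ and extracting a subsequence $c_{n_k}\to c$ using norm compactness of $C$ gives $v_{n_k}\to c$. When $\tau$ is the weak topology, I would first establish norm boundedness of $(v_n)$ by applying property-$(P_1)$ to $W=\{x:|g(x)|<1\}$ for each $g\in X^*$, using norm boundedness of $C$ to bound $|g(v_n)|$, and invoking the uniform boundedness principle on the resulting weakly bounded sequence.

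Next I would show that every $w^*$-cluster point $v^{**}$ of $(v_n)$ in $X^{**}$ lies in $C$: if not, since $C$ is $w^*$-compact in $X^{**}$ (being weakly compact in $X$), the Hausdorff $w^*$-topology separates $v^{**}$ from $C$ by disjoint $w^*$-open sets $U\ni v^{**}$ and $U'\supseteq C$, and $w^*$-compactness of $C$ yields a $w^*$-neighborhood $\hat{W}$ of $0$ in $X^{**}$ with $C+\hat{W}\ci U'$; setting $W:=\hat{W}\cap X$ gives a weak neighborhood of $0$ in $X$ with $C+W\ci U'$, so property-$(P_1)$ forces $v_n$ eventually into $U'$, contradicting $v^{**}\in U$ being a $w^*$-cluster point. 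Norm boundedness together with all $w^*$-cluster points lying in $X$ is the standard characterization of weak relative compactness, and Eberlein-Smulian then extracts a weakly convergent subsequence whose limit lies in $C$ by the same lower semicontinuity reasoning as in the forward direction. The principal obstacle is this weak-topology case of the reverse direction: the lack of metrizability of the weak topology forces a detour through $X^{**}$ with a $w^*$-separation argument and Eberlein-Smulian, in contrast to the short metric argument that suffices for the norm topology.
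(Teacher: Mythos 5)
Your proposal is correct, and it diverges from the paper's proof in both directions. For the necessity, the paper simply defers to the proof of \cite[Theorem~2.2]{PB}; you prove it in full, and your three steps (non-emptiness and $\tau$-closedness of $\trC_V^f(F)$ via SACP plus $\tau$-lower semicontinuity of $r_f(\cdot,F)$, boundedness plus Eberlein--\v{S}mulian for weak compactness, and the contradiction argument with $v_n\in(\tfrac1n)\text{-}\trC_V^f(F)\sm(\trC_V^f(F)+W)$) are exactly the standard argument being cited. For the sufficiency, the paper gives a single argument uniform in $\tau$: fix a neighbourhood $W\supseteq W_1+W_2$, use property-$(P_1)$ to produce $w_n\in\trC_V^f(F)$ with $v_n-w_n\in W_1$ eventually, extract a $\tau$-convergent subsequence of $(w_n)$ by compactness, and transfer convergence to $(v_{n_k})$. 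Your norm-topology case is this same argument made quantitative ($d(v_n,\trC_V^f(F))\to0$). Your weak-topology case is genuinely different: boundedness via the uniform boundedness principle, a $w^*$-separation/tube-lemma argument in $X^{**}$ showing every $w^*$-cluster point of $(v_n)$ lies in $\trC_V^f(F)$, and Eberlein--\v{S}mulian. This is heavier machinery, but it buys you something real: in the paper's argument the approximating sequence $(w_n)$ is manufactured from the $\de$ associated with the \emph{pre-fixed} neighbourhood $W$, so the extracted subsequence and its limit a priori depend on $W$; upgrading ``for each $W$ there is a subsequence eventually in $w_W+W$'' to an actual $\tau$-convergent subsequence needs a further diagonal or uniformity step, which is immediate for the norm topology but not for the non-metrizable weak topology. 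Your detour through $(X^{**},w^*)$ sidesteps that issue entirely, so your write-up is, if anything, the more watertight of the two in the weak case.
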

\begin{proof}
The necessity follows from the proof of \cite[Thereom~2.2]{PB}. We now derive the converse.

Let $F\in\mc{A}$, $f:\ell_\iy^+(F)\ra\mb{R}_{\geq0}$ be a function in $\mr{F}_{cm}$ and $(v_n)\ci V$ be such that $r_f(v_n,F)\ra\trr_V^f(F)$. Let $W$ be a $\tau$-neighbourhood of $0$ and $W_1,W_2$ be two $\tau$-open neighbourhoods of $0$ such that $W_1+W_2\ci W$. As $(X,V,\mc{A})$ has the $\tau$-$\mr{F}_{cm}$-property-$(P_1)$, there exists $\de>0$ such that $\de-\trC_V^f(F)\ci\trC_V^f(F)+W_1$. Now, there exists $N_1\in\mb{N}$ such that $r_f(v_n,F)\leq\trr_V^f(F)+\de$ for all $n\geq N_1$, i.e., $v_n\in \de-\trC_V^f(F)$ for all $n\geq N_1$. Thus, there exists a sequence $(w_n)\ci\trC_V^f(F)$ such that $v_n-w_n\in W_1$ for all $n\geq N_1$. As $\trC_V^f(F)$ is $\tau$-compact, $(w_n)$ has a $\tau$-convergent subsequence $(w_{n_k})$ converging to some $w\in W$. Thus there exists $N_2\in\mb{N}$ such that $w_{n_k}\in \{w\}+W_2$ for all $k\geq N_2$ and so, $v_{n_k}=(v_{n_k}-w_{n_k})+w_{n_k}\in \{w\}+W$ for all $k\geq \max\{N_1,N_2\}$. This establishes our assertion.
\end{proof}

The following theorem characterizes reflexive spaces based on triplets satisfying weak-$\mr{F}_{cmc}$-SACP.

Using the fact that for a $F\in\mc{CB}(X)$ and a certain $f:\ell_\iy^+(F)\ra\mb{R}_{\geq0}$ in $\mr{F}_{cmc}$, due to the coercivity of $f$, any sequence $(v_n)\ci V$ such that $r_f(v_n,F)\ra\trr_V^f(F)$ implies that $(v_n)$ is bounded. Thus we have the following.
\begin{thm}\label{T4}
 If $X$ is reflexive, then for every $V\in\mc{CC}(X)$, the triplet $(X,V,\mc{CB}(X))$ has weak-$\mr{F}_{cmc}$-$\eSACP$.
\end{thm}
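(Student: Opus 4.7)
The plan is to follow exactly the pattern flagged by the authors in the sentence immediately preceding the statement: coercivity forces the minimizing sequence to be norm-bounded, and then reflexivity supplies the weakly convergent subsequence via the Eberlein--\v{S}mulian theorem.

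In detail, fix $F\in\mc{CB}(X)$, $f\in\mr{F}_{cmc}$ and a sequence $(v_n)\ci V$ with $r_f(v_n,F)\ra\trr_V^f(F)$. First I would verify that $\trr_V^f(F)<\iy$: choose any $v_0\in V$ and note that $\vp_0=(\|v_0-a\|)_{a\in F}$ lies in $\ell_\iy^+(F)$ because $F$ is bounded, so $\trr_V^f(F)\leq f(\vp_0)<\iy$. Writing $\vp_n=(\|v_n-a\|)_{a\in F}$, observe that for any fixed $a\in F$ one has $\|\vp_n\|_\iy\geq\|v_n-a\|\geq\|v_n\|-\|a\|$, so if $(v_n)$ were unbounded, a subsequence would satisfy $\|\vp_n\|_\iy\ra\iy$, and then coercivity of $f$ would give $r_f(v_n,F)=f(\vp_n)\ra\iy$, contradicting $r_f(v_n,F)\ra\trr_V^f(F)<\iy$. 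Hence $(v_n)$ is norm-bounded.

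Since $X$ is reflexive, the Eberlein--\v{S}mulian theorem yields a weakly convergent subsequence of $(v_n)$. This is precisely what is required for weak-$\mr{F}_{cmc}$-$\eSACP$ of the triplet $(X,V,\mc{CB}(X))$; note that we do not even need the limit to belong to $V$ in the definition, although it does automatically because $V\in\mc{CC}(X)$ is weakly closed.

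There is no genuine obstacle here: the convex and monotone hypotheses on $f$ play no role in this direction, and the only substantive input is coercivity paired with reflexivity. The only micro-technical point is ensuring finiteness of $\trr_V^f(F)$ so that the contradiction in the boundedness step is meaningful, which is handled by a single evaluation of $f$ at any admissible $v_0\in V$.
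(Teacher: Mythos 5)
Your proposal is correct and follows exactly the route the paper itself indicates in the sentence preceding the theorem: coercivity of $f$ forces the minimizing sequence to be norm-bounded (your finiteness check on $\trr_V^f(F)$ and the lower bound $\|\vp_n\|_\iy\geq\|v_n\|-\|a\|$ fill in the details the paper leaves implicit), and reflexivity via Eberlein--\v{S}mulian then yields the weakly convergent subsequence required by the definition. Nothing further is needed.
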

 Theorem~\ref{T4} leads to the following observation.
\begin{cor}\label{P2}
If $C$ is a weakly compact subset of $X$, then $(X,C,\mc{CB}(X))$ has weak-$\mr{F}_{cmc}$-$\eSACP$.
\end{cor}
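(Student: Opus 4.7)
The aim is to verify weak-$\mr{F}_{cmc}$-SACP for $(X,C,\mc{CB}(X))$, that is, to show that for every $F\in\mc{CB}(X)$, every $f\in\mr{F}_{cmc}$, and every sequence $(v_n)\ci C$ with $r_f(v_n,F)\ra\trr_C^f(F)$, the sequence $(v_n)$ admits a weakly convergent subsequence. Since $C$ is weakly compact, it is weakly closed and bounded, so the triplet $(X,C,\mc{CB}(X))$ fits the framework of Definition~\ref{D1}(d) and the quantity $\trr_C^f(F)$ is well-defined.

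The plan is to observe that the conclusion follows purely from weak compactness of $C$, without actually using the coercivity of $f$ or the minimizing condition on $r_f(v_n,F)$ in any essential way. Specifically, I would invoke the Eberlein--\v{S}mulian theorem: a subset of a Banach space is weakly compact if and only if it is weakly sequentially compact. Since $C$ is weakly compact, every sequence $(v_n)\ci C$ already has a weakly convergent subsequence, and in particular this holds for any minimizing sequence with respect to $r_f(\cdot,F)$.

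This argument is parallel in spirit to the proof of Theorem~\ref{T4}: in that result, coercivity of $f$ combined with reflexivity of $X$ is used to first extract a bounded sequence and then apply weak sequential compactness of $B_X$; here, the weak sequential compactness is handed to us outright by the hypothesis on $C$, so coercivity plays no role. There is essentially no obstacle — the corollary is a one-line consequence of Eberlein--\v{S}mulian once the definitional setup is unpacked. I would therefore keep the proof brief, write it as a direct verification of the SACP condition, and explicitly cite Eberlein--\v{S}mulian as the single external ingredient.
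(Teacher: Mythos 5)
Your proof is correct and takes essentially the same approach as the paper: the paper offers no separate argument for this corollary, presenting it as an immediate observation from Theorem~\ref{T4}, and the intended justification is exactly yours---weak compactness of $C$ gives weak sequential compactness by Eberlein--\v{S}mulian, so every sequence in $C$ (minimizing or not) has a weakly convergent subsequence, with limit in $C$ since a weakly compact set is weakly closed. Your observation that coercivity of $f$ plays no role here is accurate.
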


\begin{cor}
Let $C$ be a weakly compact convex subset of $X$ and $1\leq p<\iy$. Then $(L_p(\Si,X),L_p(\Si,C),\mc{CB}(L_p(\Si,X))$ has weak-$\mr{F}_{cmc}$-$\eSACP$.
\end{cor}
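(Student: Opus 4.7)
The plan is to reduce the assertion to a direct application of Corollary~\ref{P2}, taking the ambient Banach space to be $L_p(\Sigma,X)$ and the distinguished weakly compact convex set to be $L_p(\Sigma,C):=\{f\in L_p(\Sigma,X):f(t)\in C\ \mu\text{-a.e.}\}$. Once $L_p(\Sigma,C)$ is shown to be a weakly compact subset of $L_p(\Sigma,X)$, Corollary~\ref{P2} immediately delivers weak-$\mr{F}_{cmc}$-$\eSACP$ for the triplet $(L_p(\Sigma,X),L_p(\Sigma,C),\mc{CB}(L_p(\Sigma,X)))$. So the whole task is to establish this weak compactness.

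First I would clear away the easy parts. Since $C$ is convex and norm-closed, $L_p(\Sigma,C)$ is convex and norm-closed in $L_p(\Sigma,X)$: any $L_p$-convergent sequence has an a.e.-convergent subsequence whose a.e.~limit remains in $C$ by closedness. Hence $L_p(\Sigma,C)$ is weakly closed by Mazur's theorem. It is also norm-bounded, because $\|f\|_p\leq\mu(\Omega)^{1/p}\sup_{c\in C}\|c\|$ for every $f\in L_p(\Sigma,C)$, as $C$ is bounded and $\mu$ is finite.

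The main step is the relative weak compactness of $L_p(\Sigma,C)$ in $L_p(\Sigma,X)$. I would invoke the Diestel--Ruess--Schachermayer characterization of relative weak compactness in $L_p(\Sigma,X)$: a bounded family $K\subseteq L_p(\Sigma,X)$ is relatively weakly compact provided $\{\|f(\cdot)\|^p:f\in K\}$ is uniformly integrable and, for every $A\in\Sigma$, the set of Bochner integrals $\{\int_A f\,d\mu:f\in K\}$ is relatively weakly compact in $X$. Uniform integrability is immediate for $K=L_p(\Sigma,C)$ since $C$ is norm-bounded and $\mu(\Omega)<\infty$. For the second hypothesis, note that for $f\in L_p(\Sigma,C)$ and $\mu(A)>0$, the average $\frac{1}{\mu(A)}\int_A f\,d\mu$ lies in the closed convex hull of the essential range of $f|_A$, hence in $C$; consequently $\int_A f\,d\mu\in\mu(A)\cdot C$, which is weakly compact because $C$ is. This verifies both conditions.

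The main obstacle is clearly the weak-compactness step, and specifically the inclusion $\int_A f\,d\mu\in\mu(A)\cdot C$, which I would justify by approximating $f$ by simple functions valued in $C$ (for which the inclusion is immediate) and passing to the norm-limit, using that $C$ is closed and convex. Once the weak compactness of $L_p(\Sigma,C)$ is in hand, the remainder is a one-line appeal to Corollary~\ref{P2}.
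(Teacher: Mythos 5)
Your overall strategy coincides with the paper's: both reduce the statement to Corollary~\ref{P2} by showing that $L_p(\Si,C)$ is a weakly compact subset of $L_p(\Si,X)$. The paper obtains this weak compactness by citing Diestel's theorem \cite[Theorem~2]{JD}; you instead try to prove it directly, and this is where there is a genuine gap. The criterion you invoke --- bounded, $\{\|f(\cdot)\|^p:f\in K\}$ uniformly integrable, and $\{\int_A f\,d\mu:f\in K\}$ relatively weakly compact for every $A\in\Si$ implies $K$ relatively weakly compact --- is not the Diestel--Ruess--Schachermayer theorem and is false for general $X$. The actual DRS characterization requires that every sequence in $K$ admit convex blocks converging almost everywhere in norm; the ``integrals over sets'' criterion is only valid under Radon--Nikod\'ym-type hypotheses on $X$ and $X^*$ (cf.\ the corresponding result in \cite{D}).

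Concretely, take $X=L_1[0,1]$, $p=1$, $\mu$ Lebesgue measure on $[0,1]$, and $f_n(\om)=2^n\chi_{I_n(\om)}$, where $I_n(\om)$ is the dyadic interval of length $2^{-n}$ containing $\om$. Then $\|f_n(\om)\|_X\equiv1$, so $\{f_n\}$ is bounded and uniformly integrable in the vector-valued sense, and for each fixed $A$ the Lebesgue differentiation theorem gives $\int_A f_n\,d\mu\to\chi_A$ in norm, so every integral set is relatively norm compact. Yet under the isometry $L_1([0,1],L_1[0,1])\cong L_1([0,1]^2)$ each $f_n$ becomes $2^n\chi_{D_n}$ with $D_n$ of planar measure $2^{-n}$, a sequence that is not uniformly integrable and hence not relatively weakly compact. (This does not contradict the corollary: these $f_n$ do not take values in a fixed weakly compact subset of $X$.) So your verifications of the two hypotheses, including the correct inclusion $\int_A f\,d\mu\in\mu(A)\cdot C$, do not deliver the conclusion. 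The clean repair is to do what the paper does and simply quote \cite[Theorem~2]{JD}, which states precisely that $L_p(\Si,C)$ is weakly compact in $L_p(\Si,X)$ when $C$ is weakly compact and convex; proving that result from scratch requires genuinely more machinery than the criterion you used.
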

\begin{proof}
Let us recall from \cite[Theorem~2]{JD} that if $C$ is a weakly compact convex subset of $X$, then for $1\leq p<\iy$, $L_p(\Si,C)$ is weakly compact in $L_p(\Si,X)$. Thus, from proposition \ref{P2}, our assertion follows.
\end{proof}
We now characterize reflexive spaces with the Kadec-Klee property.
\begin{thm}\label{T6}
For a Banach space $X$, \tfae.
\bla
\item $X$ is reflexive and has the Kadec-Klee property.
\item For every $V\in\mc{CC}(X)$, $(X,V,\mc{F}(X))$ has $\mr{F}_{cwmc}$-$\eSACP$.
\el
\end{thm}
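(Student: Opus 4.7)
To prove $(a)\Ra(b)$, I would fix $V\in\mc{CC}(X)$, $F=\{a_1,\ldots,a_N\}\in\mc{F}(X)$, $f\in\mr{F}_{cwmc}$, and a sequence $(v_n)\ci V$ with $r_f(v_n,F)\ra\trr_V^f(F)$. Coercivity of $f$ makes $(v_n)$ bounded, so reflexivity supplies a subsequence $v_{n_k}\xrightarrow{w}v$, with $v\in V$ by Mazur's theorem; passing to a further subsequence I may assume $\|v_{n_k}-a_i\|\ra\alpha_i$ for each $i$. If some $\alpha_{i_0}=0$, then $v_{n_k}\ra a_{i_0}=v$ in norm and the argument is complete. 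Otherwise $(\alpha_i)_i$ lies in the open orthant $\mb{R}_{>0}^N$, where the convex function $f$ is continuous, so $f((\alpha_i)_i)=\lim r_f(v_{n_k},F)=\trr_V^f(F)$. Weak lower semicontinuity of the norm gives $\|v-a_i\|\leq\alpha_i$ for each $i$, and monotonicity of $f$ then forces $r_f(v,F)\leq f((\alpha_i)_i)=\trr_V^f(F)$, yielding equality.

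Next, the contrapositive of weakly strict monotonicity, applied to $f((\|v-a_i\|)_i)=f((\alpha_i)_i)$ with $(\|v-a_i\|)_i\leq(\alpha_i)_i$, rules out strict inequality in every coordinate, so some $i_0$ must satisfy $\|v-a_{i_0}\|=\alpha_{i_0}$. The Kadec-Klee property, in its standard equivalent form that $u_k\xrightarrow{w}u$ together with $\|u_k\|\ra\|u\|$ implies $u_k\ra u$ in norm, applied to $u_k=v_{n_k}-a_{i_0}$, then delivers $v_{n_k}\ra v$ in norm. This is the main obstacle of the proof: isolating one coordinate $i_0$ at which equality holds is exactly where weakly strict monotonicity---rather than mere monotonicity---becomes indispensable, since with only monotonicity a uniform strict gap $\|v-a_i\|<\alpha_i$ across all $i$ could persist and the Kadec-Klee upgrade would fail.

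For $(b)\Ra(a)$, taking $F=\{x\}$ and $f(t)=t$ (which clearly belongs to $\mr{F}_{cwmc}$) for arbitrary $x\in X$ and $V\in\mc{CC}(X)$ reduces the hypothesis to approximative compactness of every closed convex subset of $X$, hence to proximinality of every such set. If $X$ were non-reflexive, James' theorem would supply some $g\in S_{X^*}$ not attaining its norm, making $V=g^{-1}([1,\iy))$ a closed convex set with $d(0,V)=1$ possessing no norm-minimizer, a contradiction; thus $X$ is reflexive. For the Kadec-Klee property, given $x_n,x\in S_X$ with $x_n\xrightarrow{w}x$, I would choose $g\in S_{X^*}$ with $g(x)=1$ via Hahn-Banach and set $V=g^{-1}([1,\iy))$ and $y_n=x_n/g(x_n)$ for $n$ large enough that $g(x_n)>0$; then $y_n\in V$, $y_n\xrightarrow{w}x$, and $\|y_n\|\ra 1=d(0,V)$, so approximative compactness of $V$ extracts a norm-convergent subsequence whose limit is necessarily $x$. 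Hence a subsequence of $x_n=g(x_n)y_n$ converges in norm to $x$; since the same argument applies to every subsequence of $(x_n)$, the full sequence converges in norm to $x$.
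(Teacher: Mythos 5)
Your argument for $(a)\Rightarrow(b)$ is correct and follows essentially the same route as the paper: coercivity gives boundedness, reflexivity extracts a weakly convergent subsequence, weak lower semicontinuity of the norm together with monotonicity shows the weak limit $v$ is an $f$-center, weakly strict monotonicity isolates a coordinate $i_0$ with $\|v-a_{i_0}\|=\lim_k\|v_{n_k}-a_{i_0}\|$, and the Kadec--Klee property (in its Radon--Riesz form) upgrades the weak convergence to norm convergence. You are in fact slightly more careful than the paper at one point: by splitting off the case $\alpha_{i_0}=0$ you only invoke continuity of the convex function $f$ on the open orthant, whereas the paper asserts continuity ``on bounded subsets of $\mb{R}^N_{\geq0}$,'' which for a merely convex function is guaranteed only on the interior of its domain. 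The genuine difference is in $(b)\Rightarrow(a)$: the paper omits this direction altogether (``It remains to prove $(a)\Rightarrow(b)$''), implicitly deferring to earlier results and to the Mhaskar--Pai citation used in the subsequent proposition, while you supply a self-contained elementary proof --- singletons with the identity function reduce the hypothesis to approximative compactness (hence proximinality) of every closed convex set, James' theorem then yields reflexivity via the non-attaining half-space, and the half-space $\{y: g(y)\geq 1\}$ through a weakly convergent sequence on the sphere, combined with the concluding subsequence argument, gives the Kadec--Klee property. That explicit converse is a worthwhile addition and is correct as written.
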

\begin{proof}
It remains to prove $(a)\Ra (b)$. Let $F=\{x_1,\cdots,x_N\}\in\mc{F}(X)$, $f:\mb{R}^N_{\geq0}\ra\mb{R}_{\geq0}$ be a function in $\mr{F}_{cwmc}$ and  $(v_n)\ci V$ be such that $r_f(v_n, F)\ra\tr{rad}_V^f(F)$. Since $f$ is coercive, $(v_n)$ is bounded. Now, by the reflexivity of $X$, $(v_n)$ has a subsequence $(v_{n_k})$, converging weakly to $v_0\in V$. By passing on to a subsequence, if necessary, let $\mu_i=\underset{k}{\lim}\|v_{n_k}-x_i\|$ for all $i=1,\cdots,N$. Now, by the lower semicontinuity of the norm, we have $\|v_0-x_i\|\leq\mu_i$ for all $i=1,\cdots,N$. From \cite[Proposition~1.4(iv)]{LV}, by the lower semicontinuity of
the function $r_f(., F)$, we obtain
\[\trr_V^f(F)\leq r_f(v_0,F)\leq\liminf_k r_f(v_{n_k},F)=\trr_V^f(F).
\]
Thus $v_0\in\trC_V^f(F)$.

{\sc Claim:} There exists $i\in\{1,\cdots,N\}$ such that $\|v_0-x_i\|=\mu_i$.

If not, suppose that $\|v_0-x_i\|<\mu_i$ for all $i=1,\cdots,N$. Using the weakly
strictly monotonicity of $f$ and the continuity of $f$ on bounded subsets of $\mb{R}^N_{\geq0}$, we have
\beqa
f(\|v_0-x_1\|,\cdots,\|v_0-x_N\|)&<&f(\mu_1,\cdots,\mu_N)\\
&=&\lim_k f(\|v_{n_k}-x_1\|,\cdots,\|v_{n_k}-x_N\|)\\
&=&\trr_V^f(F).
\eeqa
This contradiction establishes our claim. Also, as $v_{n_k}-x_i\xrightarrow{w}v_0-x_i$, by the Kadec-Klee property of $X$, we have $v_{n_k}\ra v_0$. This establishes our assertion.
\end{proof}
\begin{rem}
The authors in \cite[Example~2.9]{ZZ} provided an example of a reflexive Banach space $X$ that does not have the Kadec-Klee property. Hence, from Theorems \ref{T4} and \ref{T6}, it follows that there exists a closed convex subset $V$ of $X$ such that $(X,V,\mc{F}(X))$ has weak-$\mr{F}_{cwmc}$-$\eSACP$ but $(X,V,\mc{F}(X))$ does not have $\mr{F}_{cwmc}$-$\eSACP$.
\end{rem}

Our next result shows that for Banach spaces $X$ with the Kadec-Klee property, for a  closed convex subset $V$ of $X$, weak-$\mr{F}_{cwm}$-SACP for a triplet $(X,V,\mc{F}(X))$ implies $\mr{F}_{cwm}$-SACP for $(X,V,\mc{F}(X))$.
\begin{prop}
For a Banach space $X$, \tfae.
\bla
\item $X$ has the Kadec-Klee property.
\item For $V\in\mc{CC}(X)$, if $(X,V,\mc{F}(X))$ has weak-$\mr{F}_{cwm}$-$\eSACP$, then  $(X,V,\mc{F}(X))$ has $\mr{F}_{cwm}$-$\eSACP$.
\el
\end{prop}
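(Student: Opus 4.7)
I would split the biconditional into $(a)\Ra(b)$ and $(b)\Ra(a)$, obtaining the first by adapting the blueprint of Theorem~\ref{T6} and the second by constructing an explicit triplet that disproves the contrapositive. For $(a)\Ra(b)$, I imitate the proof of Theorem~\ref{T6} almost verbatim: the only substitution is that the weakly convergent subsequence is now supplied by the hypothesised weak-$\mr{F}_{cwm}$-$\eSACP$ rather than by reflexivity together with coercivity of $f$. Concretely, given $F=\{x_1,\ldots,x_N\}\in\mc{F}(X)$, $f\in\mr{F}_{cwm}$, and $(v_n)\ci V$ with $r_f(v_n,F)\ra\trr_V^f(F)$, weak-$\mr{F}_{cwm}$-$\eSACP$ supplies a subsequence $v_{n_k}\xrightarrow{w}v_0\in V$, and a further extraction makes $\|v_{n_k}-x_i\|\ra\mu_i$ for every $i$. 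The weak lower semicontinuity arguments of Theorem~\ref{T6} place $v_0$ in $\trC_V^f(F)$, and the weakly strict monotonicity of $f$ forces $\|v_0-x_i\|=\mu_i$ for at least one index $i$. Invoking the Kadec-Klee property on $v_{n_k}-x_i\xrightarrow{w}v_0-x_i$ together with $\|v_{n_k}-x_i\|\ra\|v_0-x_i\|$ finally upgrades this to $v_{n_k}\ra v_0$ in norm.

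For $(b)\Ra(a)$, I argue by contrapositive. Assuming the Kadec-Klee property fails, I pick $w_0\in S_X$ and a sequence $(w_n)\ci X$ with $w_n\xrightarrow{w}w_0$, $\|w_n\|\ra 1$, and $\|w_n-w_0\|\geq\e_0>0$ after passing to a subsequence. I fix $\phi\in S_{X^*}$ with $\phi(w_0)=1$ and set $\widetilde{w}_n=w_n+(1-\phi(w_n))w_0$; since $1-\phi(w_n)\ra 0$, the perturbation still tends weakly to $w_0$, with $\|\widetilde{w}_n\|\ra 1$ and no norm-convergent subsequence, but now $\phi(\widetilde{w}_n)=1$ for every $n$. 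I then set $V=\overline{\mathrm{co}}(\{\widetilde{w}_n:n\in\mb{N}\}\cup\{w_0\})$, which by Krein's theorem is closed, convex and weakly compact, and which is contained in the supporting hyperplane $\{\phi=1\}$ so that $\|v\|\geq 1$ for all $v\in V$. With $F=\{0\}$ and $f$ the identity map on $\mb{R}_{\geq 0}$ (which lies in $\mr{F}_{cwm}$), I read off $\trr_V^f(F)=1$ and $(\widetilde{w}_n)$ is a minimizing sequence. Weak compactness of $V$ makes weak-$\mr{F}_{cwm}$-$\eSACP$ for $(X,V,\mc{F}(X))$ automatic via Eberlein-\v{S}mulian, while the absence of a norm-convergent subsequence in $(\widetilde{w}_n)$ violates $\mr{F}_{cwm}$-$\eSACP$.

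The main technical hurdle lies in $(b)\Ra(a)$: one needs a single $V$ that simultaneously satisfies the hypothesis of $(b)$ and violates its conclusion. Krein's theorem delivers the weak compactness of $V$ for free, but the perturbation $\widetilde{w}_n$ is essential to pin the radius $\trr_V^f(F)$ at $\|w_0\|=1$; without this pinning, convex combinations in $V$ could have norm strictly below $1$ and destroy the minimizing property of $(\widetilde{w}_n)$.
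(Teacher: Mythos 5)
Your proof is correct. The $(a)\Ra(b)$ direction is essentially identical to the paper's: both extract the weakly convergent subsequence from the hypothesised weak-$\mr{F}_{cwm}$-$\eSACP$ and then rerun the argument of Theorem~\ref{T6} (lower semicontinuity of $r_f(\cdot,F)$ to place $v_0$ in $\trC_V^f(F)$, weak strict monotonicity to force $\|v_0-x_i\|=\mu_i$ for some $i$, then Kadec--Klee). For $(b)\Ra(a)$ you genuinely diverge: the paper observes that a $V$ with weak-$\mr{F}_{cwm}$-$\eSACP$ is weakly approximatively compact, hence (by $(b)$) approximatively compact, and then invokes the characterization of property $(H)$ from Mhaskar--Pai [HM, p.~396, Thm.~3]; you instead argue the contrapositive with an explicit counterexample. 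Your construction is sound: the correction $\widetilde{w}_n=w_n+(1-\phi(w_n))w_0$ pins the whole set $\{\widetilde{w}_n\}\cup\{w_0\}$, hence $V=\overline{\mathrm{co}}(\{\widetilde{w}_n\}\cup\{w_0\})$, inside the hyperplane $\{\phi=1\}$, so with $F=\{0\}$ and $f=\mathrm{id}\in\mr{F}_{cwm}$ one gets $\trr_V^f(F)=1$ with $(\widetilde{w}_n)$ a minimizing sequence having no norm-convergent subsequence, while Krein's theorem plus Eberlein--\v{S}mulian makes weak-$\mr{F}_{cwm}$-$\eSACP$ automatic. Your route buys a self-contained, quantitative argument that does not lean on an external theorem and makes transparent exactly which triplet witnesses the failure of $(b)$; the paper's route is shorter and reuses a known equivalence, but at the cost of an opaque citation (and, arguably, of leaving implicit why an arbitrary weakly approximatively compact convex set satisfies the full hypothesis of $(b)$ --- a point your construction sidesteps entirely by producing a weakly compact $V$). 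One cosmetic remark: to literally apply the Kadec--Klee property as defined in the paper you should normalize $v_{n_k}-x_i$ to the unit sphere and treat the case $\mu_i=0$ separately, but this is equally implicit in the paper's own Theorem~\ref{T6} and is not a gap.
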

\begin{proof}
$(a)\Ra(b)$. Let $F\in\mc{F}(X)$, $f:\ell_\iy^+(F)\ra\mb{R}_{\geq0}$ be a function in $\mc{F}_{cwm}$ and  $(v_n)\ci V$ be such that $r_f(v_n, F)\ra\tr{rad}_V^f(F)$. Then, by our assumption, $(v_n)$ has a subsequence $(v_{n_k})$ converging weakly to $v_0\in V$. Then, as in the proof of $(a)\Ra(b)$ of Theorem~\ref{T6}, by passing on to a subsequence, if necessary, we can show that $v_{n_k}\ra v_0$, and so, $(a)\Ra(b)$. 

$(b)\Ra(a)$ Let $V\in\mc{CC}(X)$ be such that $(X,V,\mc{F}(X))$ has weak-$\mr{F}_{cwm}$-SACP. Then $V$ is weakly approximatively compact in $X$ and hence, by our assumption, $V$ is approximatively compact in $X$. Our assertion now follows from \cite[Page~396, Theorem~3]{HM}.
\end{proof}

Proceeding in the same way as in the proof of \cite[Theorem~3.4]{LV}, we obtain the following.
\begin{thm}
Let $X$ be a separable Banach space with the Namioka-Phelps property. Then, for any $w^*$-closed subspace $Y$ of $X^*$, $(X^*,Y,\mc{F}(X^*))$ has $\mr{F}_{cwmc}$-$\eSACP$. 
\end{thm}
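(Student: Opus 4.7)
The plan is to mimic almost verbatim the proof of Theorem~\ref{T6}, replacing the role of reflexivity by $w^*$-compactness of bounded sets in $X^*$ (via Banach--Alaoglu) and the role of the Kadec--Klee property by the Namioka--Phelps property. Let $F=\{x_1^*,\dots,x_N^*\}\in\mc{F}(X^*)$, let $f\in\mr{F}_{cwmc}$, and let $(y_n)\ci Y$ satisfy $r_f(y_n,F)\to\trr_Y^f(F)$. Coercivity of $f$ forces $(y_n)$ to be norm bounded. Since $X$ is separable, bounded subsets of $X^*$ are $w^*$-metrizable, so $(y_n)$ has a $w^*$-convergent subsequence $(y_{n_k})$, and its limit $y_0$ lies in $Y$ because $Y$ is $w^*$-closed. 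Passing to a further subsequence I may assume the scalar limits $\mu_i:=\lim_k\|y_{n_k}-x_i^*\|$ exist for $i=1,\dots,N$.

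Next I argue $y_0\in\trC_Y^f(F)$. By $w^*$-lower semicontinuity of the norm on $X^*$, $\|y_0-x_i^*\|\le\mu_i$ for every $i$. Monotonicity together with convexity (hence continuity on bounded sets of $\mb{R}^N_{\ge 0}$) of $f$ gives
\[
\trr_Y^f(F)\le r_f(y_0,F)\le f(\mu_1,\dots,\mu_N)=\lim_k r_f(y_{n_k},F)=\trr_Y^f(F),
\]
so equality holds throughout. Then exactly as in Theorem~\ref{T6}, weak strict monotonicity forces at least one index $i_0$ with $\|y_0-x_{i_0}^*\|=\mu_{i_0}$: otherwise strict inequality in every coordinate would yield the strict inequality $f(\|y_0-x_1^*\|,\dots,\|y_0-x_N^*\|)<f(\mu_1,\dots,\mu_N)$, contradicting the displayed chain.

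The decisive step is upgrading $w^*$-convergence to norm convergence of $(y_{n_k})$, and this is where separability and the Namioka--Phelps property enter. If $\mu_{i_0}=0$ then $\|y_{n_k}-x_{i_0}^*\|\to 0$ and hence $y_{n_k}\to y_0=x_{i_0}^*$ in norm. Otherwise set
\[
z_k:=\frac{y_{n_k}-x_{i_0}^*}{\|y_{n_k}-x_{i_0}^*\|},\qquad z_0:=\frac{y_0-x_{i_0}^*}{\|y_0-x_{i_0}^*\|}.
\]
Both lie in $S_{X^*}$, and since $y_{n_k}-x_{i_0}^*\xrightarrow{w^*}y_0-x_{i_0}^*$ and $\|y_{n_k}-x_{i_0}^*\|\to\|y_0-x_{i_0}^*\|\neq 0$, we have $z_k\xrightarrow{w^*}z_0$. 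The Namioka--Phelps property of $X$ forces $\|z_k-z_0\|\to 0$, and multiplying back by the convergent scalars gives $y_{n_k}\to y_0$ in norm, as required.

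The main obstacle is the last paragraph: one has to recognise that the Namioka--Phelps hypothesis is exactly what converts the coordinate-wise norm convergence $\|y_{n_k}-x_{i_0}^*\|\to\|y_0-x_{i_0}^*\|$ (together with $w^*$-convergence) into norm convergence of $y_{n_k}$ itself, playing precisely the role that the Kadec--Klee property played in Theorem~\ref{T6}. Separability is needed twice: once to extract a $w^*$-convergent subsequence from a bounded sequence in $X^*$, and implicitly in the routine diagonalisation needed to make all $N$ scalar sequences $\|y_{n_k}-x_i^*\|$ convergent.
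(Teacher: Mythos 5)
Your proof is correct and takes essentially the approach the paper intends: the paper merely defers to the proof of Vesel\'y's Theorem~3.4, which is exactly this adaptation of Theorem~\ref{T6}, with separability plus Banach--Alaoglu replacing reflexivity and the Namioka--Phelps property (applied after normalizing $y_{n_k}-x_{i_0}^*$ to the unit sphere) replacing the Kadec--Klee property. The only trivial quibble is that extracting the scalar limits $\mu_i$ uses Bolzano--Weierstrass, not separability.
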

\section{Various implications of smoothness and rotundity in relation to $\tau$-$\mr{F}$-SACP}\hfill

For a $\tau$-closed subset $V$ of $X$, and  a monotone norm $\ga:\mb{R}^N\ra\mb{R}_{\geq0}$ we define the following.
\begin{notn}
\bln
\item[\rm(1)] $\ell_\ga^N(X)$ denotes the space $X^N$ with norm $|||~.~|||$ defined by $|||(x_1,\cdots,x_N)|||=\ga(\|x_1\|,\cdots,\|x_N\|)$.
\item[\rm(2)]  $\De_\ga(V,N)=\{(v,\cdots, v)\in X^N:v\in V\}$. 
\el
\end{notn}
We further define the following.
\begin{notn}
\bln
\item[\rm(1)] $\mr{F}_{mn}$ denotes the class of all monotone norms $\ga:\mb{R}^N\ra\mb{R}_{\geq0}$ for all $N\in\mb{N}$.
\item[\rm(2)] $\mr{F}_{msn}$ denotes the class of all monotone smooth norms $\ga:\mb{R}^N\ra\mb{R}_{\geq0}$ for all $N\in\mb{N}$.
\item[\rm(3)] $\mr{F}_{mscn}$ denotes the class of all monotone strictly convex norms $\ga:\mb{R}^N\ra\mb{R}_{\geq0}$ for all $N\in\mb{N}$.
\el
\end{notn}
For $x\in X$, a $\tau$-closed subset $V$ of $X$, $F=\{x_1,\cdots,x_N\}\in\mc{F}(X)$, and a monotone norm $\ga:\mb{R}^N\ra\mb{R}_{\geq0}$, we have $r_\ga(x,F)=\ga(\|x-x_1\|,\cdots,\|x-x_N\|)=|||(x,\cdots,x)-(x_1,\cdots,x_N)|||$ and $\trr_V^\ga(F)=\underset{v\in V}{\inf}\ga(\|v-x_1\|,\cdots,\|v-x_N\|)=\underset{v\in V}{\inf}|||(v,\cdots,v)-(x_1,\cdots,x_N)|||=d((x_1,\cdots,x_N),\De_\ga(V,N))$. Now, it is easy to observe that $v\in\trC_V^\ga(F)$ if and only if  $(v,\cdots,v)\in P_{\De_\ga(V,N)}((x_1,\cdots,x_N))$. We identify the dual space $\ell_\ga^N(X)^*$ by $\ell_{\ga^\circ}^N(X^*)$, with the norm $|||~.~|||^\circ$ defined by $|||(f_1,\cdots,f_N)|||^\circ=\ga^\cc(\|f_1\|,\cdots,\|f_N\|)$,  where $\ga^\cc$ is the dual norm of $\ga$. The pairing between $\ell_\ga^N(X)$ and $\ell_{\ga^\cc}^N(X^*)$ is given as, for $f=(f_1,\cdots,f_N)\in\ell_{\ga^\cc}^N(X^*)$ and $x=(x_1,\cdots,x_N)\in\ell_\ga^N(X)$, $f(x)=\sum_{i=1}^Nf_i(x_i)$. 

From the above discussion, we immediately have the following.
\begin{thm}\label{T2}
Let $V$ be a $\tau$-closed subset of $X$ and $\mr{G}\ci\mr{F}_{mn}$. Then $(X,V,\mc{F}(X))$ has $\tau$-$\mr{G}$-$\eSACP$ if and only if $\De_\ga(V,N)$ is approximatively $\tau$-compact in $\ell_\ga^N(X)$ for all monotone norm  $\ga:\mb{R}^N\ra\mb{R}_{\geq0}$ in $\mr{G}$ for all $N\in\mb{N}$. Also for $F\in\mc{F}(X)$ with $card(F)=N$, and a monotone norm $\ga:\mb{R}^N\ra\mb{R}_{\geq0}$ in $\mr{G}$, $\emC_V^\ga(F)$ is singleton if and only if $P_{\De_\ga(V,N)}((x_1,\cdots,x_N))$ is singleton.
\end{thm}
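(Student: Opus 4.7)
The plan is to exploit the identifications laid out in the paragraph immediately preceding the theorem, which already carry out most of the work; the proof amounts to a translation between two equivalent ways of recording the same approximation data.

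First I would verify that $\tau$-convergence in $\ell_\ga^N(X)$ reduces to coordinatewise $\tau$-convergence in $X$. For the norm topology this is clear from the definition of $|||\cdot|||$, since $\ga$ is a norm on $\mb{R}^N$ and so controls each coordinate from above and below. For the weak topology I would use the recorded identification $\ell_\ga^N(X)^*=\ell_{\ga^\cc}^N(X^*)$ together with the pairing $f(x)=\sum_{i=1}^N f_i(x_i)$: testing against functionals supported on a single coordinate yields coordinatewise weak convergence of any weakly convergent sequence in $\ell_\ga^N(X)$, and conversely any dual element is a finite sum of such coordinate-supported functionals, so coordinatewise weak convergence forces weak convergence in $\ell_\ga^N(X)$.

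With this in hand, the forward direction unwinds as follows. Assume $(X,V,\mc{F}(X))$ has $\tau$-$\mr{G}$-$\eSACP$, fix $\ga\in\mr{G}$ with $\ga:\mb{R}^N\ra\mb{R}_{\geq0}$, and let $y=(y_1,\ldots,y_N)\in\ell_\ga^N(X)$ together with $(w_n)=((v_n,\ldots,v_n))\ci\De_\ga(V,N)$ satisfying $|||w_n-y|||\ra d(y,\De_\ga(V,N))$. Setting $F=\{y_1,\ldots,y_N\}$, the identifications preceding the theorem give $r_\ga(v_n,F)\ra\trr_V^\ga(F)$, so $\tau$-$\mr{G}$-$\eSACP$ produces a $\tau$-convergent subsequence $v_{n_k}\xrightarrow{\tau}v\in V$; by the first step, $w_{n_k}\xrightarrow{\tau}(v,\ldots,v)\in\De_\ga(V,N)$, establishing approximative $\tau$-compactness of $\De_\ga(V,N)$. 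The converse reverses this argument verbatim, and the singleton assertion is immediate from the already-recorded equivalence $v\in\trC_V^\ga(F)\Lra(v,\ldots,v)\in P_{\De_\ga(V,N)}((x_1,\ldots,x_N))$.

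The main technical subtlety I foresee concerns tuples $y\in\ell_\ga^N(X)$ whose coordinates coincide, so that the associated $F$ has fewer than $N$ distinct elements and does not directly fit the hypothesis of $\tau$-$\mr{G}$-$\eSACP$. I would handle this by reading the theorem as applying to $y$ with pairwise distinct coordinates, or by reducing to that case via the observation that restricting $\ga$ to a diagonal subspace $\{t_i=t_j\}\ci\mb{R}^N$ yields a monotone norm on $\mb{R}^{N-1}$, which lies in $\mr{F}_{mn}$ (and in $\mr{F}_{msn}$, $\mr{F}_{mscn}$ whenever $\ga$ does) and reproduces exactly the $|||\cdot|||$-distance in question, so the natural classes $\mr{G}$ of interest are stable under this collapsing operation.
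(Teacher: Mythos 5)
Your proposal is correct and follows essentially the same route as the paper, which states the theorem as an immediate consequence of the identifications $r_\ga(v,F)=|||(v,\cdots,v)-(x_1,\cdots,x_N)|||$, $\trr_V^\ga(F)=d((x_1,\cdots,x_N),\De_\ga(V,N))$ and $v\in\trC_V^\ga(F)\Lra(v,\cdots,v)\in P_{\De_\ga(V,N)}((x_1,\cdots,x_N))$ without writing out the details; your verification that $\tau$-convergence in $\ell_\ga^N(X)$ is coordinatewise simply makes that "immediate" step explicit. Your flagged subtlety about tuples with repeated coordinates is a real gap in the paper's unstated argument, and your resolution of it is reasonable.
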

Our next result shows that for every $N\in\mb{N}$ and a monotone smooth norm $\ga:\mb{R}^N\ra\mb{R}_{\geq0}$, $\ell_\ga^N(X)$ is Fr\'echet smooth, when $X$ is Fr\'echet smooth.
\begin{thm}\label{T3}
Suppose $X$ is Fr\'echet smooth. Then for every $N\in\mb{N}$ and monotone smooth norm $\ga:\mb{R}^N\ra\mb{R}_{\geq0}$, $\ell_\ga^N(X)$ is Fr\'echet smooth.
\end{thm}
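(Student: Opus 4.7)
The plan is to verify Fr\'echet differentiability of $|||~.~|||$ at each $x=(x_1,\dots,x_N)\in S_{\ell_\ga^N(X)}$ via the \v{S}mulyan criterion: I must show that every sequence $(F_n)\ci S_{\ell_{\ga^\cc}^N(X^*)}$ satisfying $F_n(x)\ra 1$ is norm-convergent in $\ell_{\ga^\cc}^N(X^*)$. Writing $F_n=(f_{n,1},\dots,f_{n,N})$ and setting $a_{n,i}=\|f_{n,i}\|$, $b_i=\|x_i\|$, the natural chain $F_n(x)=\sum_i f_{n,i}(x_i)\leq \sum_i a_{n,i}b_i\leq \ga^\cc(a_n)\ga(b)=1$ together with $F_n(x)\ra 1$ forces each nonpositive difference $f_{n,i}(x_i)-a_{n,i}b_i$ to tend to $0$ and also $\sum_i a_{n,i}b_i\ra 1=\ga^\cc(a_n)\ga(b)$, realizing the duality equality between $\ga$ and $\ga^\cc$.

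From this last equality, the finite-dimensional \v{S}mulyan principle inside $(\mb{R}^N,\ga^\cc)$ takes over: smoothness of $\ga$ at $b$ forces strict convexity of $\ga^\cc$, so $a_n\ra\al:=\nabla\ga(b)$ in norm. The crucial degeneracy observation is that when $b_i=0$, the map $s\mapsto\ga(b_1,\dots,b_{i-1},s,b_{i+1},\dots,b_N)$ is even in $s$ (since $\ga$ is a monotone norm on $\mb{R}^N$), hence its derivative at $s=0$ must vanish, giving $\al_i=0$. Thus $\al_i\neq 0$ forces $b_i>0$. For coordinates with $\al_i=0$ I immediately get $a_{n,i}\ra 0$ and therefore $f_{n,i}\ra 0$ in $X^*$. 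For the remaining coordinates ($\al_i>0$, and hence $b_i>0$), eventually $a_{n,i}$ is bounded away from $0$ and $\frac{f_{n,i}}{a_{n,i}}(x_i/b_i)\ra 1$ with $\|f_{n,i}/a_{n,i}\|=1$; by Fr\'echet smoothness of $X$ at $x_i/b_i$, \v{S}mulyan in $X$ gives $f_{n,i}/a_{n,i}\ra\vp_i$ in norm, where $\vp_i$ is the unique norming functional at $x_i/b_i$. Hence $f_{n,i}\ra\al_i\vp_i$, and coordinatewise norm convergence combined with continuity of $\ga^\cc$ yields $|||F_n-G|||^\cc\ra 0$, where $G=(\al_1\vp_1,\dots,\al_N\vp_N)$ (with $\vp_i$ set to $0$ on degenerate coordinates).

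The main obstacle will be the evenness-forces-vanishing-derivative step: without it, the composition $x\mapsto\ga(\|x_1\|,\dots,\|x_N\|)$ cannot be Fr\'echet differentiable at any $x$ with a zero coordinate, because $\|~.~\|$ is not differentiable at $0\in X$. It is precisely smoothness together with the monotonicity built into $\ga$ that annihilates the offending partial derivatives, rescues differentiability of the composition at such points, and thereby completes the \v{S}mulyan verification above.
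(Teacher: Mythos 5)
Your proof is correct and follows essentially the same route as the paper: the \v{S}mulyan criterion for $|||\cdot|||$, convergence of the coordinate norms $(\|f_{n,i}\|)$ forced by smoothness of $\ga$ at $(\|x_1\|,\dots,\|x_N\|)$, and then coordinatewise Fr\'echet smoothness of $X$ applied to the normalized functionals. The only divergence is organizational: you split on $\al_i=0$ versus $\al_i>0$ (killing degenerate coordinates via the evenness of the absolute norm $\ga$), whereas the paper splits on $x_i=0$ versus $x_i\neq 0$ and resolves its degenerate case through uniqueness of the norming functional; your split is in fact slightly safer, since it also covers the possibility $x_i\neq 0$ but $f_i=0$, where the paper's normalization $f_n^i/\|f_n^i\|$ would need a separate word.
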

\begin{proof}
We prove the case for $N=2$, since the proof for other values of $N$ follows similarly. Let $\ga:\mb{R}^2\ra\mb{R}_{\geq0}$ be a monotone smooth norm, and $\ga^\cc$ be the dual norm of $\ga$. Let $(x_1,x_2)\in S_{\ell_\ga^2(X)}$,  and $((f_n^1,f_n^2))\ci S_{\ell_{\ga^\cc}^2(X^*)}$ be such that $f_n^1(x_1)+f_n^2(x_2)\ra 1$. Let $(f_1,f_2)\in S_{\ell_{\ga^\cc}^2(X^*)}$ be such that $f_1(x_1)+f_2(x_2)=1$. Then $1=f_1(x_1)+f_2(x_2)\leq\|f_1\|\|x_1\|+\|f_2\|\|x_2\|\leq\ga^\cc(\|f_1\|,\|f_2\|)\ga(\|x_1\|,\|x_2\|)=1$. Thus $\|f_1\|\|x_1\|+\|f_2\|\|x_2\|=1$. Since $\ga$ is a smooth norm, $J_{\mb{R}^2}((\|x_1\|,\|x_2\|))=\{(\|f_1\|,\|f_2\|)\}$.

{\sc Claim:} $(\|f_n^1\|),(\|f_n^2\|)$ converges to $\|f_1\|$ and $\|f_2\|$ respectively.

Let $(\|f_{n_k}^1\|)$ be a subsequence of $(\|f_n^1\|)$. Since $(\|f_{n_k}^1\|)$ is bounded, it has a convergent subsequence, also denoted by $(\|f_{n_k}^1\|)$, converging to some $\al_1\in\mb{R}$. Similarly, $(\|f_{n_k}^2\|)$ has a convergent subsequence, also denoted by $(\|f_{n_k}^2\|)$, converging to some $\al_2\in\mb{R}$. Now $\al_1\|x_1\|+\al_2\|x_2\|=\underset{k}{\lim}(\|f_{n_k}^1\|\|x_1\|+\|f_{n_k}^2\|\|x_2\|)=1$. Since $\ga$ is a smooth norm on $\mb{R}^2$, we have $\al_1=\|f_1\|$ and $\al_2=\|f_2\|$.  Thus every subsequence of $(\|f_n^1\|)$ has a further subsequence converging to $\|f_1\|$ and hence $(\|f_n^1\|)$ is convergent to $\|f_1\|$. Proceeding in the same way, we can show that $(\|f_n^2\|)$ is convergent to $\|f_2\|$.

{\sc Case 1: } Suppose $x_1,x_2\neq0$. Then
\beqa
1=\underset{n}{\lim}(f_n^1(x_1)+f_n^2(x_2))&\leq&\underset{n}{\lim\inf}(\|f_n^1\|\|x_1\|+\|f_n^2\|\|x_2\|)\\
&\leq&\underset{n}{\lim\sup}(\|f_n^1\|\|x_1\|+\|f_n^2\|\|x_2\|)\\
&\leq&\underset{n}{\lim\sup}\ga^\cc(\|f_n^1\|,\|f_n^2\|)\ga(\|x_1\|,\|x_2\|)=1.
\eeqa
Hence $\underset{n}{\lim}(f_n^1(x_1)+f_n^2(x_2))=\underset{n}{\lim}(\|f_n^1\|\|x_1\|+\|f_n^2\|\|x_2\|)=1$.  Now it is easy to see that $\underset{n}{\lim}(\|f_n^1\|\|x_1\|-f_n^1(x_1))=0$ and  $\underset{n}{\lim}(\|f_n^2\|\|x_2\|-f_n^2(x_2))=0$. Hence $\frac{f_n^1}{\|f_n^1\|}(\frac{x_1}{\|x_1\|})\ra 1$ and $\frac{f_n^2}{\|f_n^2\|}(\frac{x_2}{\|x_2\|})\ra 1$. Now, due to the Fr\'echet differentiability of $X$, we have $(\frac{f_n^1}{\|f_n^1\|})$ and $(\frac{f_n^2}{\|f_n^2\|})$ are convergent.

 Hence $(f_n^1)$ and $(f_n^2)$ are convergent, and thus $((f_n^1,f_n^2))$ is convergent. 

 {\sc Case 2: } Suppose $x_1\neq0$ and $x_2=0$. Then we have $1=\|f_1\|\|x_1\|\leq\ga^\cc(\|f_1\|,0)\ga(\|x_1\|,\|x_2\|)=\ga^\cc(\|f_1\|,0)\leq\ga^\cc(\|f_1\|,\|f_2\|)=1$. Thus $\ga^\cc(\|f_1\|,0)=1$ and as $J_{\mb{R}^2}((\|x_1\|,\|x_2\|))=\{(\|f_1\|,\|f_2\|)\}$, we have $f_2=0$. Hence $(f_n^2)$ converges to 0. Now proceeding in a similar way, as in Case 1, we can show that $(f_n^1)$ is convergent. Thus $((f_n^1,f_n^2))$ is convergent. 

 This concludes our assertion.
\end{proof}
We now characterize  Banach spaces that are Fr\'echet smooth, based on triplets having $\mr{F}$-SACP.
\begin{thm}
For a Banach space $X$, the following are equivalent.
\bla
\item $X$ is Fr\'echet smooth. 
\item For any $w^*$-closed convex subset $V$ of $X^*$, $(X^*,V,\mc{F}(X^*))$ has  $\mr{F}_{msn}$-$\eSACP$ and for all $F\in\mc{F}(X^*)$ and monotone smooth norm $\ga:\mb{R}^N\ra\mb{R}_{\geq0}$, where $card(F)=N$, $\emC_V^\ga(F)$ is singleton.
\item Every $w^*$-closed convex subset of $X^*$ is approximatively compact and Chebyshev in $X^*$.
\el
\end{thm}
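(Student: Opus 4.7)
The plan is to prove the theorem via the cycle $(c) \Rightarrow (a) \Rightarrow (b) \Rightarrow (c)$, with the auxiliary implication $(a) \Rightarrow (c)$ needed as an ingredient in the step $(a) \Rightarrow (b)$. For $(c) \Rightarrow (a)$, I would fix $x \in S_X$ and apply (c) to the $w^*$-closed convex affine hyperplane $V_x = \{f \in X^* : f(x) = 1\}$. Since $d(0, V_x) = 1$ and $P_{V_x}(0) = J_X(x)$, the Chebyshev property of $V_x$ forces $J_X(x)$ to be a singleton (smoothness of $\|\cdot\|_X$ at $x$), while approximative compactness of $V_x$ says that every $(f_n) \ci V_x$ with $\|f_n\| \ra 1$ has a norm-convergent subsequence; combining these with uniqueness gives precisely the Smulyan criterion for Fr\'echet differentiability of $\|\cdot\|_X$ at $x$.

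The converse $(a) \Rightarrow (c)$ can be established classically: existence of a nearest point from $g \in X^*$ to a $w^*$-closed convex $V$ uses $w^*$-lower semicontinuity of $\|\cdot\|$ together with $w^*$-compactness of $w^*$-closed bounded subsets of $X^*$; uniqueness uses the $w^*$-Hahn-Banach theorem to produce $\xi \in X$ that norms both candidates $(g-v_i)/d(g,V) \in S_{X^*}$, which must coincide by smoothness of $\|\cdot\|_X$ at $\xi$; and approximative compactness follows from the $w^*$-to-norm Kadec-Klee property on $S_{X^*}$ that Fr\'echet smoothness of $X$ provides via Smulyan. For $(b) \Rightarrow (c)$, I would specialize (b) to $N=1$ and to $\ga = |\cdot|$, a monotone smooth norm on $\mb{R}$: for any singleton $F = \{x\}$, $r_\ga(v,F) = \|v-x\|$ and $\emC_V^\ga(F) = P_V(x)$, so (b) directly asserts that every $w^*$-closed convex $V \ci X^*$ is Chebyshev and approximatively compact.

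For the main implication $(a) \Rightarrow (b)$, given a monotone smooth norm $\ga: \mb{R}^N \ra \mb{R}_{\geq 0}$, Theorem~\ref{T3} gives that the product $\ell_\ga^N(X)$ is Fr\'echet smooth, so by $(a) \Rightarrow (c)$ applied to $\ell_\ga^N(X)$, every $w^*$-closed convex subset of $\ell_\ga^N(X)^*$ is Chebyshev and approximatively compact. For a $w^*$-closed convex $V \ci X^*$ and $F = \{x_1,\ldots,x_N\} \in \mc{F}(X^*)$, the diagonal $\De_\ga(V,N)$ embedded in the appropriate product of copies of $X^*$ is $w^*$-closed and convex, and by the identification preceding Theorem~\ref{T2}, $v \in \emC_V^\ga(F)$ if and only if $(v,\ldots,v) \in P_{\De_\ga(V,N)}((x_1,\ldots,x_N))$; singleton-ness and approximative norm-compactness of minimizing sequences therefore transfer coordinate-wise from the product to $V \ci X^*$. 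The main obstacle is the duality bookkeeping $\ell_\ga^N(X)^* = \ell_{\ga^\cc}^N(X^*)$ in this step: one must carefully align the $w^*$-topology on the product (arising from its predual $\ell_\ga^N(X)$) with the topology on $(X^*)^N$ that encodes the Chebyshev center problem for $V$, so that the Fr\'echet smoothness of $\ell_\ga^N(X)$ supplied by Theorem~\ref{T3} transfers the required Chebyshev and approximative compactness back faithfully to $V$.
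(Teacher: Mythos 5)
Your proposal follows essentially the same route as the paper's proof: the same reduction of $(a)\Ra(b)$ to $(a)\Ra(c)$ applied to $\ell_\ga^N(X)$ via Theorem~\ref{T3} and the diagonal correspondence $v\in\trC_V^\ga(F)\Lra(v,\dots,v)\in P_{\De_\ga(V,N)}((x_1,\dots,x_N))$ from the beginning of Section~3; the same separation-plus-Smulyan argument for $(a)\Ra(c)$ (the paper packages existence, uniqueness and approximative compactness into the single observation that every minimizing sequence converges, rather than treating the three properties separately as you do); and the same specialization to singletons for $(b)\Ra(c)$. The only real divergence is $(c)\Ra(a)$: the paper cites \cite[Theorem~2.10]{PB}, whereas you reprove it directly from the hyperplane $V_x=\{f\in X^*:f(x)=1\}$, using that $P_{V_x}(0)$ is the set of support functionals at $x$. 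That argument is correct and has the merit of making the equivalence self-contained.

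One caution about the step you yourself single out as ``the main obstacle,'' which you describe but do not actually carry out. Applying $(a)\Ra(c)$ to $\ell_\ga^N(X)$ yields that every $w^*$-closed convex subset of $\ell_\ga^N(X)^*=\ell_{\ga^\cc}^N(X^*)$ is Chebyshev and approximatively compact \emph{in the norm} $|||(f_1,\dots,f_N)|||^\cc=\ga^\cc(\|f_1\|,\dots,\|f_N\|)$. The metric projection of $(x_1,\dots,x_N)$ onto the diagonal in that space therefore encodes the center problem for the norm $\ga^\cc$, i.e.\ it identifies $\trC_V^{\ga^\cc}(F)$ rather than $\trC_V^{\ga}(F)$. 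To reach the statement for $\ga$ itself one must realize $\ell_\ga^N(X^*)$ as the dual of $\ell_{\ga^\cc}^N(X)$, and Theorem~\ref{T3} then requires $\ga^\cc$ to be a monotone \emph{smooth} norm --- which smoothness of $\ga$ does not guarantee, since the dual of a smooth norm on $\mb{R}^N$ is strictly convex but need not be smooth. The paper's own proof makes the same silent substitution (it projects onto $\De_{\ga^\cc}(V,N)$ inside $\ell_{\ga^\cc}^N(X^*)$ and then invokes the correspondence), so you are in the same position as the authors; but an airtight write-up should either restrict to norms $\ga$ that are simultaneously smooth and strictly convex (so that $\ga^\cc$ is again smooth and monotone), or state the conclusion of $(b)$ for the dual class of norms.
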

\begin{proof}
$(a)\Ra(c)$ Let $V$ be a $w^*$-closed convex subset of $X^*$ and $f\in X^*$. Without loss of generality, we may assume that $f=0$ and $d(0,V)=1$. Now, as $d(0,V)=1$, the open unit ball of $X^*$ is disjoint from the convex set $V$, and so, by \cite[Theorem~2.2.26]{Mg}, there exists $x\in S_X$ such that $f(x)\geq1$ for all $f\in V$. Let $(f_n)\ci V$ be such that $\|f_n\|\ra d(0,V)=1$. Now, $1\leq f_n(x)\leq \|f_n\|\ra 1$ and hence by the Fr\'echet differentiability of the norm in $X$ at $x$, $(\frac{f_n}{\|f_n\|})$ is convergent and hence $(f_n)$ is convergent. Since any minimizing sequence for $f$ in $V$ is convergent, $V$ is Chebyshev in $X^*$. This concludes our assertion.

$(c)\Ra(a)$ follows from \cite[Theorem~2.10]{PB}.

We only need to prove $(a)\Ra (b)$ as $(b)\Ra (c)$ is obvious.

$(a)\Ra(b)$ Let $V$ be a $w^*$-closed convex subset of $X^*$, $\ga:\mb{R}^N\ra\mb{R}_{\geq0}$ be a monotone smooth norm and  $\ga^\cc$ be the dual norm of $\ga$. Since $X$ is Fr\'echet smooth, by Theorem \ref{T3}, $\ell_\ga^N(X)$ is also Fr\'echet smooth. Now $\De_{\ga^\cc}(V,N)$ is a $w^*$-closed convex subset of $\ell_{\ga^\cc}^N(X^*)$ and hence from $(a)\Ra(c)$, $\De_{\ga^\cc}(V,N)$ is approximatively compact and Chebyshev in $\ell_{\ga^\cc}^N(X^*)$. Now, from Theorem \ref{T2}, our assertion follows.
\end{proof}
\begin{prop}\label{P5}
For an $N\in\mb{N}$ and monotone norm $\ga:\mb{R}^N\ra\mb{R}_{\geq0}$, if $(f_1,\cdots,f_N)\in NA(\ell_\ga^N(X))$, then $\frac{f_i}{\|f_i\|}\in NA(X)$ for all $i=1,\cdots,N$ such that $f_i\neq0$.
\end{prop}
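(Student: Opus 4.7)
The plan is to exploit the two H\"older-type inequalities automatically built into $\ell_\ga^N(X)$: the coordinate-by-coordinate duality $f_i(x_i) \le \|f_i\|\|x_i\|$ in each factor $X$, and the finite-dimensional duality $\sum_{i=1}^N a_ib_i \le \ga(a_1,\dots,a_N)\,\ga^\cc(b_1,\dots,b_N)$ between $\ga$ and its dual norm $\ga^\cc$ on $\mb{R}^N$. A norm-attaining functional on $\ell_\ga^N(X)$ is forced to achieve equality in both chains \emph{simultaneously}, and this rigidity is precisely what should deliver a norming vector for each individual $\frac{f_i}{\|f_i\|}$.

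Concretely, I would fix a norming element $(x_1,\dots,x_N) \in S_{\ell_\ga^N(X)}$ for $(f_1,\dots,f_N)$ and, after rescaling, assume $|||(f_1,\dots,f_N)|||^\cc = \ga^\cc(\|f_1\|,\dots,\|f_N\|) = 1$, so that $\ga(\|x_1\|,\dots,\|x_N\|) = 1$ and $\sum_{i=1}^N f_i(x_i) = 1$. Chaining the two H\"older inequalities one obtains
\[
1 \;=\; \sum_{i=1}^N f_i(x_i) \;\le\; \sum_{i=1}^N \|f_i\|\,\|x_i\| \;\le\; \ga^\cc(\|f_1\|,\dots,\|f_N\|)\,\ga(\|x_1\|,\dots,\|x_N\|) \;=\; 1,
\]
which collapses to equalities everywhere. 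Since each term satisfies $f_i(x_i) \le \|f_i\|\,\|x_i\|$, equality of the sums forces $f_i(x_i) = \|f_i\|\,\|x_i\|$ for every $i$.

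For any index $i$ with $f_i \neq 0$ and $\|x_i\| > 0$, dividing through by $\|f_i\|\,\|x_i\|$ yields $\frac{f_i}{\|f_i\|}\!\bigl(\frac{x_i}{\|x_i\|}\bigr) = 1$, so $\frac{f_i}{\|f_i\|}$ attains its norm at $x_i/\|x_i\| \in S_X$, proving $\frac{f_i}{\|f_i\|} \in NA(X)$.

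The main obstacle I anticipate is the exceptional case $f_i \neq 0$ while $\|x_i\| = 0$, where the coordinate equality $f_i(x_i) = \|f_i\|\,\|x_i\|$ degenerates to $0 = 0$ and is uninformative. Here I would exploit the monotonicity of $\ga$ through a perturbation argument: choose $y_n \in S_X$ with $f_i(y_n) \to \|f_i\|$, replace the vanishing $i$-th slot of the norming element by $t\,y_n$ for small $t > 0$, and renormalize to the unit sphere. Monotonicity of $\ga$ controls the growth of $t \mapsto \ga(\|x_1\|,\dots,t,\dots,\|x_N\|)$, and comparing its one-sided derivative at $0$ against the first-order change in the functional value should either produce a fresh norming vector with $\|x_i'\| > 0$ (reducing to the previous case) or force $f_i$ to attain its norm in the limit directly.
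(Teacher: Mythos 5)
Your main chain of inequalities is exactly the argument the paper gives (the paper writes it out only for $N=2$): equality must hold throughout
\[
1=\sum_{i=1}^N f_i(x_i)\le\sum_{i=1}^N\|f_i\|\,\|x_i\|\le\ga^\cc(\|f_1\|,\dots,\|f_N\|)\,\ga(\|x_1\|,\dots,\|x_N\|)=1,
\]
and termwise comparison then gives $f_i(x_i)=\|f_i\|\,\|x_i\|$ for every $i$. Up to that point you and the paper coincide.

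The obstacle you flag --- $f_i\ne0$ while $x_i=0$ --- is not a removable technicality, and no perturbation argument can close it, because the statement is actually false in that case. Take $\ga$ to be the $\ell_1$-norm on $\mb{R}^2$ (which is monotone), so $\ell_\ga^2(X)=X\oplus_1X$ and $\ga^\cc(b_1,b_2)=\max(|b_1|,|b_2|)$. Let $X=c_0$, $f_1=e_1^*$, and let $g=(2^{-1},2^{-2},\dots)\in\ell_1=c_0^*$, a norm-one functional that does not attain its norm on $B_{c_0}$; put $f_2=\tfrac12 g$. Then $\ga^\cc(\|f_1\|,\|f_2\|)=\max(1,\tfrac12)=1$ and $(f_1,f_2)$ attains its norm at $(e_1,0)\in S_{X\oplus_1X}$, so $(f_1,f_2)\in NA(\ell_\ga^2(X))$, yet $f_2\ne0$ and $f_2/\|f_2\|=g\notin NA(X)$. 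Your sketched repair necessarily fails here: replacing the zero slot by $ty_n$ costs $t$ in the $\ell_1$-norm but gains at most $t\|f_2\|=t/2$ in the functional value, so the normalized value strictly drops; in general the first-order comparison only works out when $\|f_i\|$ matches the relevant one-sided derivative of $\ga$, which need not happen. Note that the paper's own proof contains the identical gap --- it passes from $f_i(x_i)=\|f_i\|\,\|x_i\|$ to $f_i(x_i/\|x_i\|)=\|f_i\|$ without checking $x_i\ne0$. A correct version must either restrict the conclusion to those indices $i$ for which some norming element has $x_i\ne0$, or impose a condition on $\ga$ forcing this (e.g.\ for $\ga=\ell_2$, equality in the middle inequality gives $(\|x_1\|,\dots,\|x_N\|)=(\|f_1\|,\dots,\|f_N\|)$, so $f_i\ne0$ implies $x_i\ne0$).
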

\begin{proof}
We prove the result for $N=2$ as no new ideas are involved for higher values of $N$. Let $\ga:\mb{R}^2\ra\mb{R}_{\geq0}$ be a monotone norm and $\ga^\cc$ be the dual norm of $\ga$. Let  $(f_1,f_2)\in NA(\ell_\ga^2(X))$ and $(x_1,x_2)\in S_{\ell_\ga^2(X)}$ be such that $f_1(x_1)+f_2(x_2)=1$. Then
\beqa
1=f_1(x_1)+f_2(x_2)&\leq&\|f_1\|\|x_1\|+\|f_2\|\|x_2\|\\
&\leq&\ga^\cc(\|f_1\|,\|f_2\|)\ga(\|x_1\|,\|x_2\|)=1.
\eeqa
Thus $\|f_1\|\|x_1\|+\|f_2\|\|x_2\|=f_1(x_1)+f_2(x_2)$. Since $f_i(x_i)\leq\|f_i\|\|x_i\|$, we have $f_i(\frac{x_i}{\|x_i\|})=\|f_i\|$ for $i=1,2$. Thus if $f_i\neq0$, then $\frac{f_i}{\|f_i\|}\in NA(X)$ for $i=1,2$. 
\end{proof}
Our next result demostrates the fact that for every $N\in\mb{N}$, and a strictly convex monotone norm $\ga:\mb{R}^N\ra\mb{R}_{\geq0}$, $\ell_\ga^N(X)$ is a $\tau-ALUR$ space, when $X$ is a $\tau-ALUR$ space.
\begin{thm}\label{T9}
Let $X$ be a $\tau$-ALUR space. Then for every $N\in\mb{N}$, and strictly convex  monotone norm $\ga:\mb{R}^N\ra\mb{R}_{\geq0}$, $\ell_\ga^N(X)$ is a $\tau$-ALUR space.
\end{thm}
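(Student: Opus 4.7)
The plan is to verify the norm-attaining-functional characterization of $\tau$-$ALUR$ recalled after Definition \ref{D2}. So let $(f_1,\ldots,f_N)\in NA(\ell_\ga^N(X))\cap S_{\ell_{\ga^\cc}^N(X^*)}$ and $(y_n)=((y_n^1,\ldots,y_n^N))\ci B_{\ell_\ga^N(X)}$ with $\sum_{i=1}^N f_i(y_n^i)\to 1$, and fix an attaining $(x_1,\ldots,x_N)\in S_{\ell_\ga^N(X)}$, i.e., $\sum_i f_i(x_i)=1$. I aim to show $(y_n)$ is $\tau$-convergent in $\ell_\ga^N(X)$.

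The first step is the sandwich
\[\sum_i f_i(y_n^i)\leq \sum_i\|f_i\|\|y_n^i\|\leq \ga^\cc(\|f_1\|,\ldots,\|f_N\|)\,\ga(\|y_n^1\|,\ldots,\|y_n^N\|)\leq 1,\]
which forces $\sum_i\|f_i\|\|y_n^i\|\to 1$, $\ga((\|y_n^i\|)_i)\to 1$, and each non-negative term $\|f_i\|\|y_n^i\|-f_i(y_n^i)\to 0$. Compactness of bounded subsets of $\mb{R}^N_{\geq 0}$ lets me extract, from any subsequence, a further subsequence with limit $(\al_i)$ satisfying $\ga(\al)=1$ and $\sum_i\|f_i\|\al_i=1$. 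Strict convexity of $\ga$ forces such $\al$ to be unique (two candidates would average to a point still on $S_{\mb{R}^N_\ga}$ by the sandwich, and strict convexity then identifies them), and $(\|x_i\|)$ itself satisfies these conditions, so $\|y_n^i\|\to\|x_i\|$ for every $i$. A by-product of the same uniqueness combined with monotonicity of $\ga$ is that $\|f_i\|=0$ implies $\|x_i\|=0$: replacing $\|x_i\|$ by $0$ leaves $\sum_j\|f_j\|\|x_j\|=1$ intact and, by monotonicity, does not increase $\ga$, so the new vector either coincides with $(\|x_j\|)$ (impossible) or, after rescaling to the sphere, violates the sandwich.

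I then argue coordinatewise. If $\|f_i\|=0$ or $\|x_i\|=0$, then $\|y_n^i\|\to 0$, so $y_n^i\to 0=x_i$ in norm. Otherwise $\|f_i\|>0$ and $\|x_i\|>0$: dividing $\|f_i\|\|y_n^i\|-f_i(y_n^i)\to 0$ through by $\|f_i\|\|y_n^i\|\to \|f_i\|\|x_i\|>0$ yields $\frac{f_i}{\|f_i\|}\bigl(\frac{y_n^i}{\|y_n^i\|}\bigr)\to 1$. By Proposition \ref{P5}, $\frac{f_i}{\|f_i\|}\in NA(X)$, so the $\tau$-$ALUR$ hypothesis on $X$ renders $\bigl(\frac{y_n^i}{\|y_n^i\|}\bigr)$ $\tau$-convergent. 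An alternating-sequence argument shows $J_X(g)$ is a singleton for every $g\in NA(X)$ in a $\tau$-$ALUR$ space, so the $\tau$-limit must be $\frac{x_i}{\|x_i\|}$, and rescaling by $\|y_n^i\|\to\|x_i\|$ gives $y_n^i\xrightarrow{\tau} x_i$. Since $\tau$-convergence in $\ell_\ga^N(X)$ is just coordinatewise $\tau$-convergence in $X$, the sequence $(y_n)$ is $\tau$-convergent.

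The principal obstacle I anticipate is the degenerate case $\|f_i\|=0$: the sandwich then carries no direct information about $\|y_n^i\|$, so one must combine strict convexity and monotonicity of $\ga$ to squeeze out $\|x_i\|=0$. Once that is in place, the rest is a clean reduction through Proposition \ref{P5} to the $\tau$-$ALUR$ hypothesis on $X$ applied on each non-degenerate coordinate.
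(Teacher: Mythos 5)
Your proposal is correct and follows essentially the same route as the paper's proof: the sandwich inequality forcing $\sum_i\|f_i\|\|y_n^i\|\to1$, strict convexity of $\ga$ to pin down the limits of the coordinate norms, Proposition \ref{P5} together with the $\tau$-$ALUR$ hypothesis on $X$ to get $\tau$-convergence of the normalized coordinates, and a separate treatment of the coordinates with $f_i=0$ (your monotonicity-plus-strict-convexity argument that $\|x_i\|=0$ is exactly the paper's Case 2). The only cosmetic differences are that you work with general $N$ coordinatewise rather than writing out $N=2$, and you additionally identify the $\tau$-limit, which the norm-attaining characterization of $\tau$-$ALUR$ does not require.
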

\begin{proof}
We consider the case when $N=2$ because the proof for other values of $N$ is similar. Let $\ga:\mb{R}^2\ra\mb{R}_{\geq0}$ be a strictly convex monotone norm and $\ga^\cc$ be the dual norm of $\ga$. Let $(f_1,f_2)\in NA(\ell_\ga^2(X))$ and $((x_n,y_n))\ci S_{\ell_\ga^2(X)}$ be such that $f_1(x_n)+f_2(y_n)\ra 1$.
Let $(z_1,z_2)\in S_{\ell_\ga^2(X)}$ be such that $f_1(z_1)+f_2(z_2)=1$.
 Then
\beqa
1=f_1(z_1)+f_2(z_2)&\leq&\|f_1\|\|z_1\|+\|f_2\|\|z_2\|\\
&\leq&\ga^\cc(\|f_1\|,\|f_2\|)\ga(\|z_1\|,\|z_2\|)=1.
\eeqa
Thus $\|f_1\|\|z_1\|+\|f_2\|\|z_2\|=1$. Thus $(\|f_1\|,\|f_2\|)$ attains its norm at $(\|z_1\|,\|z_2\|)$. 

{\sc Claim: } $(\|x_n\|), (\|y_n\|)$ converges to $\|z_1\|$ and $\|z_2\|$ respectively.  

Let $(\|x_{n_k}\|)$ be a subsequence of $(\|x_n\|)$. Since $(\|x_{n_k}\|)$ is bounded, $(\|x_{n_k}\|)$ has a convergent subsequence, also denoted by $(\|x_{n_k}\|)$, converging to some $\al\in\mb{R}$. In a similar way, we can show that $(\|y_{n_k}\|)$ has a convergent subsequence, also denoted by $(\|y_{n_k}\|)$, converging to some $\be\in\mb{R}$. Since the norm function is continuous, $\ga(\al,\be)=\underset{k}{\lim}\ga(\|x_{n_k}\|,\|y_{n_k}\|)=1$. Now $\|f_1\|\al+\|f_2\|\be=\underset{k}{\lim}(\|f_1\|\|x_{n_k}\|+\|f_2\|\|y_{n_k}\|)=1$. Now by the strict convexity of the norm $\ga$, we have $\al=\|z_1\|$ and $\be=\|z_2\|$. Thus $(\|x_n\|)$ is convergent to $\|z_1\|$ and $(\|y_n\|)$ is convergent to $\|z_2\|$.

{\sc Case 1: } Suppose $f_1,f_2\neq0$. Now 
\beqa 
1=\underset{n}{\lim}(f_1(x_n)+f_2(y_n))&\leq&\underset{n}{\lim\inf}(\|f_1\|\|x_n\|+\|f_2\|\|y_n\|)\\
&\leq&\underset{n}{\lim\sup}(\|f_1\|\|x_n\|+\|f_2\|\|y_n\|)\\
&\leq&\underset{n}{\lim\sup}\ga^\cc(\|f_1\|,\|f_2\|)\ga(\|x_n\|,\|y_n\|)\leq1.
\eeqa
Hence $\underset{n}{\lim}(\|f_1\|\|x_n\|+\|f_2\|\|y_n\|)=\underset{n}{\lim}(f_1(x_n)+f_2(y_n))=1$. Now it is easy to see that $\underset{n}{\lim}(\|f_1\|\|x_n\|-f_1(x_n))=0$ and $\underset{n}{\lim}(\|f_2\|\|y_n\|-f_2(y_n))=0$. Thus $\frac{f_1}{\|f_1\|}(\frac{x_n}{\|x_n\|})\ra 1$ and $\frac{f_2}{\|f_2\|}(\frac{y_n}{\|y_n\|})\ra1$.

From lemma \ref{P5}, $\frac{f_1}{\|f_1\|},\frac{f_2}{\|f_2\|}\in NA(X)$, and so, by our assumption, $(\frac{x_n}{\|x_n\|}),(\frac{y_n}{\|y_n\|})$ are $\tau$-convergent in $X$ and hence $((x_n,y_n))$ is $\tau$-convergent in $\ell_\ga^2(X)$.

{\sc Case 2:} Suppose $f_1\neq0$ and $f_2=0$. Now $1=\|f_1\|\|x_1\|=\ga^\cc(\|f_1\|,0)\ga(\|x_1\|,0)\leq \ga(\|x_1\|,\|x_2\|)=1$ and hence $\ga(\|x_1\|,0)=1$. Since $(\|f_1\|,0)$ attains its norm on $(\|x_1\|,0)$ and $\ga$ is strictly convex, we have $\|x_2\|=0$ and thus $x_2=0$. Hence $(y_n)$ converges to 0. Also, in a similar way, as in Case 1, $(x_n)$ is $\tau$-convergent. This concludes our assertion. 
\end{proof}
We now characterize $\tau-ALUR$ spaces, based on triplets having $\tau$-$\mr{F}$-SACP.
\begin{thm}
\TFAE.
\bla
\item $X$ is a $\tau$-ALUR space.
\item For every $V\in\mc{CC}(X)$, such that $(X,V,\mc{F}(X))$ has $\mr{F}_{scmn}$-$\ercp$, then $(X,V,\mc{F}(X))$ has $\tau$-$\mr{F}_{scmn}$-$\eSACP$ and for every $F\in\mc{F}(X)$ and strictly convex monotone norm $\ga:\mb{R}^N\ra\mb{R}_{\geq0}$, where $card(F)=N$, $\emC_V^\ga(F)$ is singleton.
\item Every proximinal closed convex subset of $X$ is approximatively compact and Chebyshev in $X$.
\el
\end{thm}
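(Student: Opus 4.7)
The proof parallels the Fr\'echet-smooth characterization above, with Theorem \ref{T9} replacing Theorem \ref{T3} in the passage to $\ell_\ga^N(X)$ and Theorem \ref{T2} again supplying the dictionary between $\tau$-$\mr{F}$-$\eSACP$ together with singleton $\trC_V^\ga(F)$ on the one hand and proximinality, approximative $\tau$-compactness, and the Chebyshev property of the diagonal $\De_\ga(V,N)$ on the other. I plan to establish $(a)\Lra(c)$ directly, then deduce $(a)\Ra(b)$ by lifting to $\ell_\ga^N(X)$, while $(b)\Ra(c)$ drops out as a singleton specialization.

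For $(b)\Ra(c)$, take a proximinal $V\in\mc{CC}(X)$ and apply (b) with $F=\{x\}$ a singleton and $\ga=|\cdot|$ on $\mb{R}$, which is a strictly convex monotone norm. Since $\trC_V^\ga(\{x\})=P_V(x)$, the $\mr{F}_{scmn}$-$\ercp$ hypothesis reduces exactly to proximinality of $V$, and the conclusion immediately yields approximative $\tau$-compactness (from $\tau$-$\mr{F}_{scmn}$-$\eSACP$) together with Chebyshev (from $\trC_V^\ga(F)$ being singleton). For $(a)\Ra(c)$, given $V\in\mc{CC}(X)$ proximinal and a minimizing sequence $(v_n)\ci V$, reduce by translation and scaling to $d(0,V)=1$. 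Hahn--Banach produces $f\in S_{X^*}$ with $f(v)\geq 1$ for all $v\in V$, and this $f$ attains its norm at any element of $P_V(0)$. Then $f(v_n/\|v_n\|)\ra 1$, so the characterization at the end of Definition \ref{D2} gives $\tau$-convergence of $(v_n/\|v_n\|)$ and hence of $(v_n)$. For Chebyshev, any $v\in P_V(0)$ lies in $J_X(f)$; but $\tau$-ALUR forces $J_X(f)$ to be a singleton for each $f\in NA(X)$, since plugging the alternating sequence of two distinct attaining points into the ALUR characterization produces a $\tau$-convergent sequence with two distinct subsequential limits, contradicting Hausdorffness of $\tau$.

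For $(c)\Ra(a)$, given $f\in NA(X)$ with $f(x_0)=1=\|f\|$ and $x_0\in S_X$, consider the affine hyperplane $V:=f^{-1}(1)\in\mc{CC}(X)$. A direct computation shows $d(y,V)=|1-f(y)|$, attained at $y+(1-f(y))x_0$, so $V$ is proximinal, and by (c) it is approximatively $\tau$-compact and Chebyshev, giving $P_V(0)=\{x_0\}$. For $(x_n)\ci B_X$ with $f(x_n)\ra 1$, eventually $f(x_n)>0$ and the rescalings $y_n:=x_n/f(x_n)\in V$ satisfy $1\leq\|y_n\|\leq 1/f(x_n)\ra 1$, so $(y_n)$ is minimizing in $V$ for $0$. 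Every subsequence of $(y_n)$ has a further $\tau$-convergent sub-subsequence, whose limit must be $x_0$ by Chebyshev; hence $y_n\xrightarrow{\tau}x_0$, and since $f(x_n)\ra 1$ is scalar and scalar multiplication is $\tau$-continuous, $x_n=f(x_n)y_n\xrightarrow{\tau}x_0$. Thus $X$ is $\tau$-ALUR.

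For $(a)\Ra(b)$, fix $V\in\mc{CC}(X)$ with $\mr{F}_{scmn}$-$\ercp$, $F=\{x_1,\ldots,x_N\}\in\mc{F}(X)$, and a strictly convex monotone norm $\ga:\mb{R}^N\ra\mb{R}_{\geq0}$. Theorem \ref{T2} translates the rcp hypothesis into proximinality of the closed convex set $\De_\ga(V,N)$ in $\ell_\ga^N(X)$; Theorem \ref{T9} gives that $\ell_\ga^N(X)$ is $\tau$-ALUR; applying the already-established $(a)\Ra(c)$ inside $\ell_\ga^N(X)$ yields that $\De_\ga(V,N)$ is approximatively $\tau$-compact and Chebyshev; and Theorem \ref{T2} translates back to $\tau$-$\mr{F}_{scmn}$-$\eSACP$ together with singleton $\trC_V^\ga(F)$. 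The main obstacle is the pair of auxiliary facts needed in $(a)\Ra(c)$ and $(c)\Ra(a)$, namely that $\tau$-ALUR forces each $f\in NA(X)$ to have a unique norm-attaining point, and that the rescaling $y_n=x_n/f(x_n)$ transfers $\tau$-convergence back to $(x_n)$; both steps must be handled with care when $\tau$ is the weak topology, particularly because the Hausdorffness of $\tau$ on bounded sets and the $\tau$-continuity of scalar multiplication are both being used implicitly.
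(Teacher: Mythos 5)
Your proposal is correct and follows essentially the same route as the paper: $(b)\Rightarrow(c)$ by specializing to singletons, $(a)\Rightarrow(b)$ by lifting to $\ell_\ga^N(X)$ via Theorem \ref{T9} and translating through Theorem \ref{T2}. The only difference is that you prove $(a)\Leftrightarrow(c)$ from scratch (correctly, including the sub-subsequence argument and the uniqueness of norm-attaining points under $\tau$-ALUR), whereas the paper simply cites \cite[Theorem~2.11]{PB} for that equivalence.
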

\begin{proof}
$(a)\Lra(c)$ follows from \cite[Theorem~2.11]{PB}.

We only need to prove $(a)\Ra(b)$ as $(b)\Ra(c)$ is obvious.

$(a)\Ra(b)$ Let $V\in\mc{CC}(X)$ be such that $(X,V,\mc{F}(X))$ has $\mr{F}_{scmn}$-rcp. Let $F\in\mc{F}(X)$, $card(F)=N$, and $\ga:\mb{R}^N\ra\mb{R}_{\geq0}$ be a strictly convex monotone norm. Then from Theorem \ref{T9}, $\ell_\ga^N(X)$ is a $\tau-ALUR$ space. Now by our assumption, $\De_\ga(V,N)$ is proximinal in $\ell_\ga^N(X)$ and hence from $(a)\Ra(c)$, $\De_\ga(V,N)$ is approximatively compact and Chebyshev in $\ell_\ga^N(X)$. This concludes our assertion.
\end{proof}
When $X$ is a $\tau-CLUR$ space, we have the following.
\begin{thm}
Let $X$ be a $\tau-CLUR$ space, $V\in\mc{CC}(X)$, $\mc{G}=\{F\in\mc{F}(X):\emC_V^\rho(F)\neq\es~\tr{for~all}~ \rho=(\rho_1,\cdots,\rho_{card(F)})\}$. Then, $(X,V,\mc{G})$ has weighted $\tau$-$\eSACP$.
\end{thm}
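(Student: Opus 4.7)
The plan is to fix $F\in\mc{G}$, a bounded weight $\rho=(\rho(t))_{t\in F}$, and a minimizing sequence $(v_n)\ci V$ with $r_\rho(v_n,F)\to d:=\trr_V^\rho(F)$; since $F\in\mc{G}$, I may pick $v_0\in\trC_V^\rho(F)$. The aim is to reduce the weighted center problem to a single-point proximity problem at a distinguished $t^*\in F$, and then apply the $\tau$-CLUR hypothesis to the normalization of $v_n-t^*$.

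The degenerate case $d=0$ is handled directly: for any $t_0\in F$ with $\rho(t_0)>0$, one gets $\|v_n-t_0\|\to 0$. Assume $d>0$. The first substantive step is to locate $t^*\in F$ at which $(v_n)$, $v_0$, and the midpoint sequence $w_n=(v_n+v_0)/2\in V$ all asymptotically attain the weighted distance $d$ from $t^*$. Using the finiteness of $F$, I would pass to a subsequence along which $\rho(t)\|v_n-t\|\to\beta_t$ and $\rho(t)\|w_n-t\|\to\gamma_t$ for every $t\in F$, and set $\alpha_t=\rho(t)\|v_0-t\|$. Convexity yields $\gamma_t\leq(\beta_t+\alpha_t)/2\leq d$, while the midpoint estimate combined with $v_0\in\trC_V^\rho(F)$ gives $d\leq r_\rho(w_n,F)\leq(r_\rho(v_n,F)+d)/2\to d$, so $\max_t\gamma_t=d$. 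Hence some coordinate $t^*$ satisfies $\gamma_{t^*}=\alpha_{t^*}=\beta_{t^*}=d$.

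With such a $t^*$ in hand I would normalize $y_n=(v_n-t^*)/\|v_n-t^*\|$ and $y_0=(v_0-t^*)/\|v_0-t^*\|$ in $S_X$; both are well-defined for large $n$ since $\rho(t^*)>0$ and $\|v_n-t^*\|\to d/\rho(t^*)=\|v_0-t^*\|>0$. A short algebraic expansion writes $(y_n+y_0)/2$ as $(w_n-t^*)/\|v_0-t^*\|$ plus an error whose norm is $O(|\|v_n-t^*\|-\|v_0-t^*\||)$; using $\|w_n-t^*\|\to\|v_0-t^*\|$ this shows $\|(y_n+y_0)/2\|\to 1$. The $\tau$-CLUR property then produces a $\tau$-convergent subsequence of $(y_n)$, and since $\|v_n-t^*\|$ is a convergent scalar sequence, the formula $v_n=t^*+\|v_n-t^*\|y_n$ lifts this to a $\tau$-convergent subsequence of $(v_n)$.

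The main obstacle lies in the synchronization in the second paragraph: the restricted center $v_0$ may attain its weighted supremum at a different point of $F$ than any point where $(v_n)$ nearly attains its own supremum, yet the $\tau$-CLUR normalization requires a single common $t^*$ at which $v_n-t^*$ and $v_0-t^*$ have asymptotically equal norms. The midpoint/finiteness trick, which through the inequality $\gamma_t\leq(\beta_t+\alpha_t)/2$ forces $\alpha_{t^*}=\beta_{t^*}=d$ simultaneously at a coordinate where $\max_t\gamma_t=d$ is achieved, is what resolves this difficulty.
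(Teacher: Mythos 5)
Your proposal is correct and follows essentially the same route as the paper's proof: form the midpoint sequence $w_n=\frac{v_n+v_0}{2}\in V$, use the finiteness of $F$ to extract a single coordinate $t^*$ at which the weighted distances of $v_0$, $v_n$, and $w_n$ all asymptotically equal $\trr_V^\rho(F)$, and then invoke the $\tau$-$CLUR$ condition on the (normalized) vectors $v_n-t^*$ and $v_0-t^*$. The only differences are cosmetic --- you locate $t^*$ via coordinatewise limits rather than a pigeonhole on the attaining index, and you explicitly treat the degenerate case $\trr_V^\rho(F)=0$ and the normalization to $S_X$, which the paper leaves implicit.
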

\begin{proof}
Let $F=\{x_1,\cdots,x_N\}\in \mc{G}$ and $\rho=(\rho_1,\cdots,\rho_N)$ be positive weights. Let $(v_n)\ci V$ be such that $r_\rho(v_n,F)\ra\trr_V^\rho(F)$. Then, it is easy to observe that $(v_n)$ is bounded. Let $v_0\in\trC_V^\rho(F)$ and $w_n=\frac{v_n+v_0}{2}$ for all $n\in\mb{N}$. By passing on to a subsequence, if necessary, let $r_\rho(w_n,F)=\rho_n\|w_n-x_n\|$, where $\rho_n\in(\rho_1,\cdots,\rho_N)$ and $x_n\in\{x_1,\cdots,x_N\}$ for all $n\in\mb{N}$. Then, there exists a subsequence $(x_{n_k})$ of $(x_n)$ such that $x_{n_k}=x_j$ for all $k\in\mb{N}$ and for some $j\in\{1,\cdots,N\}$. Thus, $r_\rho(w_{n_k},F)=\rho_j\|w_{n_k}-x_j\|$ for all $k\in\mb{N}$. Let $u_0=\rho_j(v_0-x_j)$ and $u_k=\rho_j(v_{n_k}-x_j)$ for all $k\in\mb{N}$. Then it is easy to observe that $\|u_0\|\leq\trr_V^\rho(F)$ and $\underset{k}{\lim}\|u_k\|\leq\trr_V^\rho(F)$. Now, for all $k\in\mb{N}$,
\[\|\frac{u_k+u_0}{2}\|=\rho_j\|w_{n_k}-x_j\|=r_\rho(w_{n_k},F)\geq\trr_V^\rho(F).\]
Since $X$ is $\tau-CLUR$, $(u_k)$ has a $\tau$-convergent subsequence, and so $(v_n)$ has a $\tau$-convergent subsequence. Hence, our conclusion follows.
\end{proof}

\begin{thm}\label{5}
Let $X$ be a Banach space, which is an $L$-summand in its bidual, and $Y$ be a subspace of $X$, which is also an $L$-summand in its bidual. Then, $(X,Y,\mc{F}(X))$ has weak-$\mr{F}_{wcmc}$-$\eSACP$.
\end{thm}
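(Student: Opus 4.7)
The plan is to follow the standard $L$-embedded / minimizing-sequence technique: coercivity gives boundedness, the $L$-decomposition isolates a singular contribution to any $w^*$-cluster point of $(v_n)$ in the bidual, weakly strict monotonicity of $f$ forces the singular contribution to vanish, and a sequential compactness property of $L$-embedded spaces extracts a weakly convergent subsequence.

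First, coercivity of $f\in\mr{F}_{cwmc}$ forces $(v_n)$ to be bounded: if $\|v_n\|\to\iy$ along a subsequence then $\|v_n-a\|\to\iy$ for every $a\in F$, so $r_f(v_n,F)\to\iy$, contradicting $r_f(v_n,F)\to\trr_Y^f(F)<\iy$. Writing $F=\{a_1,\ldots,a_N\}$, a diagonal extraction gives a subsequence (still denoted $(v_n)$) along which $\|v_n-a_i\|\to\mu_i$ for each $i$. Fixing a free ultrafilter $\mc{U}$ on $\mb{N}$, set $v^{**}:=w^*\text{-}\lim_{\mc{U}}v_n$; this lies in $Y^{**}$, regarded inside $X^{**}$, since $Y^{**}$ is the $w^*$-closure of $Y$ in $X^{**}$. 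Using that $Y$ is $L$-embedded, decompose $v^{**}=v+v_s$ with $v\in Y$ and $v_s$ in the complementary $L$-summand of $Y^{**}$.

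The key claim is $v_s=0$. Assuming the contrary, I would exploit that $X$ is also $L$-embedded: its $L$-projection $\pi_X:X^{**}\ra X$, once seen to map $Y^{**}$ into $Y$, must coincide with $\pi_Y$ on $Y^{**}$ by uniqueness of $L$-projections (see \cite[Ch.~IV]{H}). This compatibility yields, for every $a\in X$,
\[\|v^{**}-a\|_{X^{**}}=\|v-a\|_X+\|v_s\|.\]
Combined with $w^*$-lower semicontinuity of the norm, $\mu_i\geq\|v-a_i\|+\|v_s\|>\|v-a_i\|$ strictly for each $i$. Since $f$ is convex it is continuous on bounded subsets of $\mb{R}^N_{\geq0}$, so $f((\mu_i)_i)=\lim_n r_f(v_n,F)=\trr_Y^f(F)$; weakly strict monotonicity of $f$ then forces
\[r_f(v,F)=f\bigl((\|v-a_i\|)_i\bigr)<f\bigl((\mu_i)_i\bigr)=\trr_Y^f(F),\]
contradicting $v\in Y$. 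Hence $v_s=0$ and $v^{**}=v\in Y$.

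Applying this to every ultrafilter-limit of $(v_n)$ shows that the $w^*$-closure $\overline{\{v_n\}}^{w^*}$ in $X^{**}$ is contained in $Y$; being bounded it is $w^*$-compact, and since the $w^*$-topology of $Y^{**}$ restricted to $Y$ coincides with the weak topology of $Y$, this closure is weakly compact in $Y$. Eberlein-\v{S}mulian then extracts a weakly convergent subsequence of $(v_n)$. The main obstacle is the compatibility step: verifying $\pi_X(Y^{**})\ci Y$ (equivalently $\pi_X|_{Y^{**}}=\pi_Y$) from the joint $L$-embedded hypothesis on $X$ and $Y$ is the delicate point on which the norm decomposition above hinges; the rest of the argument is a clean combination of $w^*$-lower semicontinuity with the weakly strict monotonicity of $f$.
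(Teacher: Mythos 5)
Your argument is correct and follows essentially the same route as the paper's proof: coercivity gives boundedness, a $w^*$-cluster point $\Lambda$ in the bidual is compared with its image under the $L$-projection $P:X^{**}\ra X$, the $\ell_1$-additivity of the norm across the decomposition gives $\|\Lambda-x_i\|=\|P(\Lambda)-x_i\|+\|\Lambda-P(\Lambda)\|$, and weakly strict monotonicity of $f$ forces the singular part to vanish, so every cluster point lies in $Y$ and $(v_n)$ is relatively weakly compact. The compatibility step you flag as delicate is exactly the fact the paper invokes from \cite{H} (that $P$ maps $\overline{Y}^{w^*}$ onto $Y$, equivalently $Y^{\perp\perp}=Y\oplus_1(Y^{\perp\perp}\cap X_s)$), so your proof is a mild repackaging of the same argument rather than a different one.
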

\begin{proof}
Let $F=\{x_1,\cdots,x_N\}\in \mc{F}(X)$, $f:\mb{R}^N_{\geq0}\ra\mb{R}_{\geq0}$ be a function in $\mr{F}_{wcmc}$  and $(y_n)\ci Y$ be such that $r_f(y_n,F)\ra\tr{rad}_Y^f(F)$. Let $P:X^{**}\ra X$ be the $L$-projecton. Then, from \cite[Theorem~1.2]{H}, we have $P(\overline{Y}^{w^*})=Y$. Since $f$ is coercive, $(y_n)$ is bounded. Let $\La$ be a $w^*$-accumulation point of $(y_n)$. Then, as in the proof of \cite[Theorem~3.5(a)]{ST}, we have $\trr_Y^f(F)=\trr_{\overline{Y}^{w^*}}^f(F)$. Since by \cite[Thorem~1.4(iv)]{LV}, $r_f(.,F):X^{**}\ra\mb{R}_{\geq0}$ is $w^*$-lower semicontinuous, we have 
\[\trr_{\overline{Y}^{w^*}}^f(F)\leq r_f(P(\La),F)\leq r_f(\La,F)\leq\trr_Y^f(F).\]
Thus, $r_f(\La,F)=r_f(P(\La),F)$. Now, if $\La\neq P(\La)$, then for all $i=1,\cdots,N$, we have
\[\|\La-x_i\|=\|P(\La)-x_i\|+\|\La-P(\La)\|>\|P(\La)-x_i\|.\]

Since $f$ is weakly strictly monotone, we have $r_f(P(\La),F)<r_f(\La,F)$, which is a contradiction. Thus $P(\La)=\La$, and so, $\La\in Y$. Thus, every $w^*$-accumulation point of $(y_n)$ is contained in $Y$, and so, $(y_n)$ is relatively weakly compact. This establishes our assertion.
\end{proof}
Let us recall from \cite[Chapter~5]{JD} that for a sub $\si$-algebra $\Si'\ci \Si$, the conditional expectation $E:L_1(\Si,X)\ra L_1(\Si',X)$ is defined by $E(f)=g$, where $\int_Bfd\mu=\int_Bgd\mu$ for all $B\in\Si'$ and it is a linear projection of norm-1. Thus, for a Banach space $X$ when $L_1(\Si,X)$ is $L$-embedded, then for a sub $\si$-algebra $\Si'\ci \Si$, $L_1(\Si',X)$ is also $L$-embedded. Thus we have the following.
\begin{cor}
Let $L_1(\Si,X)$ be an $L$-embedded space. Then for all sub $\si$-algebra $\Si'\ci \Si$, $(L_1(\Si,X),L_1(\Si',X),\mc{F}(L_1(\Si,X))$ has weak-$\mr{F}_{cwmc}$-$\eSACP$.
\end{cor}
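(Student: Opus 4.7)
The plan is to reduce the corollary to a direct application of Theorem~\ref{5}. The only genuine step is to verify that $L_1(\Si',X)$ inherits the $L$-embedded property from $L_1(\Si,X)$; once that is in hand, Theorem~\ref{5} delivers the conclusion immediately.

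First, I would invoke the conditional expectation $E:L_1(\Si,X)\ra L_1(\Si',X)$ defined, as recalled in the paragraph preceding the corollary, by $E(f)=g$ where $\int_B f\,d\mu=\int_B g\,d\mu$ for all $B\in\Si'$. This $E$ is a linear projection of norm~1 onto $L_1(\Si',X)$, exhibiting $L_1(\Si',X)$ as the range of a norm-1 projection on $L_1(\Si,X)$.

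Next, since $L_1(\Si,X)$ is $L$-embedded by hypothesis, \cite[Theorem~IV.1.5]{H} applies and shows that the range of the norm-1 projection $E$, namely $L_1(\Si',X)$, is itself $L$-embedded, i.e.\ an $L$-summand in its bidual. In particular, $L_1(\Si,X)$ and its subspace $L_1(\Si',X)$ both satisfy the hypotheses required on the ambient space and subspace in Theorem~\ref{5}.

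Finally, I would apply Theorem~\ref{5} with ambient Banach space $L_1(\Si,X)$ and subspace $Y=L_1(\Si',X)$. The conclusion of that theorem is precisely that the triplet $(L_1(\Si,X),L_1(\Si',X),\mc{F}(L_1(\Si,X)))$ has weak-$\mr{F}_{cwmc}$-$\eSACP$, which is the assertion of the corollary. I do not foresee any real obstacle: the argument is essentially a two-line assembly, and the only subtlety is the correct invocation of the stability result for $L$-embeddedness under norm-1 projections.
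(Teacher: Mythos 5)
Your proposal is correct and follows exactly the same route as the paper: the conditional expectation $E:L_1(\Si,X)\ra L_1(\Si',X)$ is a norm-1 projection, so by \cite[Theorem~IV.1.5]{H} the subspace $L_1(\Si',X)$ inherits the $L$-embedded property, and Theorem~\ref{5} then gives the conclusion. No gaps.
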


\begin{thm}
Let $V$ be a closed subset of $X$, $F=\{x_1,\cdots,x_N\}\in \mc{F}(X)$ and $f:\mb{R}^N_{\geq0}\ra\mb{R}_{\geq0}$ be a convex monotone function. Then, for $(v_n)\ci V$ such that $r_f(v_n,F)\ra\emr_V^f(F)$ implies that $(v_n)$ is convergent if and only if, for all $\e>0$, there exists $\de>0$ such that $diam(\de-\emC_V^f(F))<\e$.
\end{thm}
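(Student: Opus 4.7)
The proposal is to prove both directions by elementary arguments; neither direction requires deep machinery.

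For the backward implication (diameter condition $\Rightarrow$ convergence of every minimizing sequence), let $(v_n)\ci V$ satisfy $r_f(v_n,F)\ra \trr_V^f(F)$. Given $\e>0$, apply the diameter condition to obtain $\de>0$ with $\mathrm{diam}(\de\text{-}\trC_V^f(F))<\e$. Since $r_f(v_n,F)\ra\trr_V^f(F)$, there exists $N_0$ such that $r_f(v_n,F)\leq\trr_V^f(F)+\de$ for all $n\geq N_0$, i.e., $v_n\in\de\text{-}\trC_V^f(F)$ for all $n\geq N_0$. Hence $\|v_n-v_m\|<\e$ for all $n,m\geq N_0$, so $(v_n)$ is Cauchy. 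Since $X$ is complete and $V$ is closed, $(v_n)$ converges to some $v\in V$.

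For the forward implication, I would argue by contrapositive. Suppose the diameter condition fails, so there exists $\e_0>0$ such that for every $n\in\mb{N}$ one can find $u_n,w_n\in (1/n)\text{-}\trC_V^f(F)$ with $\|u_n-w_n\|\geq \e_0$. Interleave these into a single sequence by setting $v_{2n-1}=u_n$ and $v_{2n}=w_n$. Then $r_f(v_k,F)\leq \trr_V^f(F)+1/\lceil k/2\rceil$, so $r_f(v_k,F)\ra\trr_V^f(F)$; that is, $(v_k)$ is a minimizing sequence. However, $\|v_{2n-1}-v_{2n}\|\geq\e_0$ for every $n$, so $(v_k)$ is not Cauchy and in particular cannot converge. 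This contradicts the hypothesis that every minimizing sequence is convergent.

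The main obstacle, which is still minor, is simply to verify that for each $n$ the set $(1/n)\text{-}\trC_V^f(F)$ is nonempty, so that the vectors $u_n,w_n$ can actually be chosen; this is immediate from the remark following the definition of $\de\text{-}\trC_V^f(F)$ that $\de\text{-}\trC_V^f(F)\neq\es$ for every $\de>0$. No use of the convexity or monotonicity of $f$ is required beyond what is implicit in the well-definedness of $\trr_V^f(F)$ and the nonemptiness of $\de\text{-}\trC_V^f(F)$.
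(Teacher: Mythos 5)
Your proof is correct, and both directions are handled by routes that differ in small but genuine ways from the paper's. For the backward implication the paper first invokes the Cantor Intersection Theorem to conclude that $\trC_V^f(F)$ is a singleton $\{v_0\}$ and then shows every minimizing sequence converges \emph{to} $v_0$; you instead show directly that any minimizing sequence is eventually contained in a set of diameter $<\e$, hence Cauchy, and conclude by completeness of $X$ and closedness of $V$. Your version is slightly more self-contained (no need to establish nonemptiness or uniqueness of the center first), while the paper's version has the side benefit of identifying the limit. For the forward implication the paper first observes that $\trC_V^f(F)=\{v_0\}$ and derives a contradiction from a minimizing sequence staying at distance $\geq\e$ from $v_0$ --- a step that tacitly uses continuity of $r_f(\cdot,F)$ (hence the convexity/monotonicity hypotheses, via the Lipschitz property of Proposition 4.1) to know that a convergent minimizing sequence must converge to a point of $\trC_V^f(F)$. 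Your interleaving of two far-apart near-minimizers sidesteps this entirely: the interleaved sequence is minimizing but not Cauchy, hence not convergent, and no identification of the limit is needed. This also substantiates your closing remark that convexity is not actually used, which is a genuine (if minor) sharpening over the paper's argument. One cosmetic point: from $\mathrm{diam}\bigl((1/n)\text{-}\trC_V^f(F)\bigr)\geq\e_0$ you can only extract $u_n,w_n$ with $\|u_n-w_n\|>\e_0/2$ (the supremum need not be attained); this changes nothing downstream.
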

\begin{proof}
 Suppose for all $(v_n)\ci V$ such that $r_f(v_n,F)\ra\trr_V^f(F)$ implies that $(v_n)$ is convergent. Then, it is easy to observe that $\trC_V^f(F)$ is singleton. Let $\trC_V^f(F)=\{v_0\}$. Suppose that there exists $\e>0$ such that for all $n\in\mb{N}$, $v_n\in\frac{1}{n}-\trC_V^f(F)$ but $\|v_n-v_0\|\geq\e$. Then $r_f(v_n,F)\ra\trr_V^f(F)$ but $(v_n)$ does not converge to $v_0$, which is a contraction. 

 Conversely, assume that, for all $\e>0$, there exists $\de>0$ such that $diam(\de-\trC_V^f(F))<\e$. Then, by the Cantor Intersection Theorem, $\trC_V^f(F)$ is singleton $\{v_0\}$ (say). Let $(v_n)\ci V$ be such that $r_f(v_n,F)\ra\trr_V^f(F)$. Then, there exists $N\in\mb{N}$, $v_n\in\de-\trC_V^f(F)$ for all $n\geq N$. Thus, $\|v_n-v_0\|\leq diam(\de-\trC_V^f(F))<\e$ for all $n\geq N$. Thus, $v_n\ra v_0$ and this establishes our assertion.
\end{proof}
\section{Continuity properties of some set-valued maps in relation to $\tau$-$\mr{F}$-SACP}
Throughout this section, by $B[x,r]$ we denote the closed ball centered at $x$ and radius $r$. In the following propositions, we present some basic observations.
\begin{prop}\label{P4}
Let $F=\{x_1,\cdots,x_N\}\in\mc{F}(X)$ and $f:\mb{R}^N_{\geq0}\ra\mb{R}_{\geq0}$ be a convex monotone function. Then, the map $r_f(.,F):X\ra\mb{R}_{\geq0}$ is Lipschitz continuous on bounded subsets of $X$.
\end{prop}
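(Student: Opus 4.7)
The plan is to factor the map $r_f(\cdot, F)$ as a composition of a $1$-Lipschitz map into $\mb{R}^N$ followed by a locally Lipschitz convex function. Define $\Phi: X \ra \mb{R}^N$ by $\Phi(x) = (\|x-x_1\|, \ldots, \|x-x_N\|)$, so that $r_f(x,F) = f(\Phi(x))$. The reverse triangle inequality immediately gives that each coordinate $x \mapsto \|x-x_i\|$ is $1$-Lipschitz, so $\Phi$ is $1$-Lipschitz when $\mb{R}^N$ carries the $\ell_\iy$-norm. In particular, bounded subsets of $X$ are mapped into bounded subsets (indeed into a box $[0,M]^N$) of $\mb{R}^N_{\geq 0}$.

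The key step will be promoting $f$ from a convex monotone function on the orthant $\mb{R}^N_{\geq 0}$ to a convex function on all of $\mb{R}^N$, so that the standard fact that finite convex functions on $\mb{R}^N$ are locally Lipschitz can be applied without worrying about the boundary. I would define $\tilde f: \mb{R}^N \ra \mb{R}_{\geq 0}$ by $\tilde f(y_1, \ldots, y_N) = f(|y_1|, \ldots, |y_N|)$. To verify convexity, fix $y, z \in \mb{R}^N$ and $t \in [0,1]$. Then $|ty_i + (1-t)z_i| \le t|y_i| + (1-t)|z_i|$, so by monotonicity of $f$ followed by convexity of $f$,
\[
\tilde f(ty+(1-t)z) \le f\bigl(t|y_1|+(1-t)|z_1|,\ldots,t|y_N|+(1-t)|z_N|\bigr) \le t\tilde f(y) + (1-t)\tilde f(z).
\]
Since $\tilde f$ agrees with $f$ on $\mb{R}^N_{\geq 0}$, we have $r_f(x,F) = \tilde f(\Phi(x))$.

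With this extension in hand, the rest is routine. A finite-valued convex function on $\mb{R}^N$ is locally Lipschitz, hence Lipschitz on every bounded subset of $\mb{R}^N$. Given a bounded set $B \ci X$, the image $\Phi(B)$ is bounded in $\mb{R}^N$, and picking a Lipschitz constant $L$ of $\tilde f$ on a bounded set containing $\Phi(B)$ gives, for all $x, x' \in B$,
\[
|r_f(x,F) - r_f(x',F)| = |\tilde f(\Phi(x)) - \tilde f(\Phi(x'))| \le L \,\|\Phi(x) - \Phi(x')\|_\iy \le L\, \|x - x'\|,
\]
establishing the claim.

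The only real obstacle is making sure the convexity argument goes through on the boundary of $\mb{R}^N_{\geq 0}$, where the usual proof of local Lipschitz continuity of convex functions (which uses that one can surround a point by a simplex of larger values) fails. The extension trick $\tilde f(y) = f(|y_1|, \ldots, |y_N|)$ neatly circumvents this, and the monotonicity hypothesis on $f$ is exactly what is needed to ensure $\tilde f$ inherits convexity.
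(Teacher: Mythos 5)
Your argument is correct and complete. The paper itself offers no proof of Proposition \ref{P4} --- it is listed among the ``basic observations'' at the start of Section 4 --- so there is nothing to compare against; your write-up in fact supplies what the authors left implicit. The decomposition $r_f(\cdot,F)=\tilde f\circ\Phi$ with $\Phi$ $1$-Lipschitz into $(\mb{R}^N,\|\cdot\|_\iy)$ is the natural route, and you correctly identify and resolve the one genuinely delicate point: the standard ``finite convex functions on $\mb{R}^N$ are locally Lipschitz'' fact is an interior statement, and the relevant values of $\Phi$ can lie on the boundary of the orthant. The symmetric extension $\tilde f(y)=f(|y_1|,\dots,|y_N|)$ handles this cleanly, and your verification that monotonicity plus convexity of $f$ yields convexity of $\tilde f$ is exactly right (an alternative, equally standard fix is to note directly that a coordinatewise nondecreasing convex function on $[0,M]^N$ is Lipschitz there by a one-coordinate-at-a-time difference-quotient estimate on $[0,2M]^N$, which avoids the extension).
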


\begin{prop}\label{P1}
Let $Y$ be a subspace of $X$, $F=\{x_1,\cdots,x_N\}\in\mc{F}(X)$, $f:\mb{R}^N_{\geq0}\ra\mb{R}_{\geq0}$ be a monotone function, and $\e>0$. Then,
\bla
\item $\emr_Y^f(F)=\inf\{r_f(y,F):y\in\cup_{i=1}^NB[x_i,\emph{rad}_Y(F)+\e]\cap Y\}
=\inf\{r_f(y,F):y\in Y~\tr{and}~ \|y\|\leq\underset{1\leq i\leq N}{\max}\|x_i\|+\emph{rad}_Y(F)+\e\}$.
\item $\emC_Y^f(F)\ci \cup_{i=1}^NB[x_i,\emph{rad}_Y(F)+\e]\cap Y$ if $f$ is weakly strictly monotone.
\item $\emC_Y^f(F)\ci\cap_{i=1}^NB[x_i,\emph{rad}_Y(F)+diam(F)+\e]\cap Y$ if $f$ is weakly strictly monotone.
\el
\end{prop}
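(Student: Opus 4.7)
The plan is to prove the three parts in sequence, with parts (b) and (c) falling out of the strategy used in (a). The crucial observation underlying all three parts is that, by the definition of $\emr_Y(F)$ as an infimum, for every $\e>0$ one can pick a witness $z\in Y$ with $\underset{1\leq i\leq N}{\max}\|z-x_i\|<\emr_Y(F)+\e$; such a $z$ automatically lies in $\cap_{i=1}^N B[x_i,\emr_Y(F)+\e]$, in particular in the union appearing on the right of (a).

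For the first equality in (a), the inequality $\emr_Y^f(F)\leq\inf\{r_f(y,F):y\in\cup_{i=1}^NB[x_i,\emr_Y(F)+\e]\cap Y\}$ is immediate because the infimum on the right is taken over a subset of $Y$. For the reverse, I would fix an arbitrary $y\in Y$: if $y$ already lies in some $B[x_i,\emr_Y(F)+\e]$, nothing more is needed. Otherwise $\|y-x_i\|>\emr_Y(F)+\e>\|z-x_i\|$ for every $i$, so the monotonicity of $f$ yields $r_f(z,F)\leq r_f(y,F)$; passing to the infimum over $y\in Y$ then supplies the reverse inequality. The second equality reduces to noticing that if $y\in B[x_i,\emr_Y(F)+\e]$ then $\|y\|\leq\|x_i\|+\emr_Y(F)+\e\leq\underset{1\leq j\leq N}{\max}\|x_j\|+\emr_Y(F)+\e$, so the union of balls intersected with $Y$ sits inside $\{y\in Y:\|y\|\leq\underset{1\leq j\leq N}{\max}\|x_j\|+\emr_Y(F)+\e\}$, which in turn sits inside $Y$; this sandwich forces all three infima to coincide.

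For (b), I would argue by contradiction: if $v\in\emC_Y^f(F)$ satisfied $\|v-x_i\|>\emr_Y(F)+\e$ for every $i$, then the witness $z$ above would give $\|z-x_i\|<\|v-x_i\|$ in every coordinate, so the weakly strict monotonicity of $f$ would force $r_f(z,F)<r_f(v,F)=\emr_Y^f(F)$, contradicting the defining infimum. Part (c) is then a one-line consequence of (b) and the triangle inequality: any $v\in\emC_Y^f(F)$ satisfies $\|v-x_j\|\leq\emr_Y(F)+\e$ for some index $j$, whence $\|v-x_i\|\leq\|v-x_j\|+\|x_j-x_i\|\leq\emr_Y(F)+diam(F)+\e$ for every $i$. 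No step here is a genuine obstacle; the only care needed is to align the strict and non-strict inequalities, so that the witness $z$ genuinely sits in the closed balls $B[x_i,\emr_Y(F)+\e]$ (via the strict bound $<\emr_Y(F)+\e$) while simultaneously delivering the strict coordinatewise domination required in the weakly-strictly-monotone step of (b).
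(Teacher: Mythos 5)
Your proposal is correct and follows essentially the same route as the paper: both arguments hinge on choosing a witness $z\in Y$ with $\underset{1\leq i\leq N}{\max}\|z-x_i\|$ at most (or strictly below) $\emph{rad}_Y(F)+\e$, using monotonicity (resp.\ weakly strict monotonicity) to dominate any $y$ outside the union of balls, and deducing $(c)$ from $(b)$ by the triangle inequality. The only cosmetic difference is your strict bound on the witness versus the paper's non-strict one; both suffice since the strictness needed in $(b)$ already comes from the gap $\emph{rad}_Y(F)+\e<\|y-x_i\|$.
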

\begin{proof}
$(a)$. Let $y\in Y\sm(\cup_{i=1}^NB[x_i,\tr{rad}_Y(F)+\e]\cap Y)$. Then, $\|y-x_i\|>\tr{rad}_Y(F)+\e$ for all $i=1,\cdots,N$. We select $y_0\in Y$ such that $\underset{1\leq i\leq N}{\max}\|y_0-x_i\|\leq\trr_Y(F)+\e$. Then, we have $\|y_0-x_i\|\leq\trr_Y(F)+\e<\|y-x_i\|$ for all $i=1,\cdots,N$. As $f$ is monotone, $r_f(y_0,F)\leq r_f(y,F)$, and hence this establishes our first equality in $(a)$.

Let $y\in \cup_{i=1}^NB[x_i,\trr_Y(F)+\e]\cap Y$. Then, there exists $j\in\{1,\cdots,N\}$ such that $\|y-x_j\|\leq\trr_Y(F)+\e$. Thus, we have 
\[\|y\|\leq\|y-x_j\|+\|x_j\|\leq\trr_Y(F)+\underset{1\leq i\leq N}{\max}\|x_i\|+\e.\]
This establishes our second equality in $(a)$.

$(b)$. Let $y\in Y\sm(\cup_{i=1}^NB[x_i,\trr_Y(F)+\e]\cap Y)$. Then, as shown in the proof of $(a)$, we obtain a $y_0\in Y$ such that $\|y_0-x_i\|\leq\trr_Y(F)+\e<\|y-x_i\|$ for all $i=1,\cdots,N$. As $f$ is weakly strictly monotone, we have $r_f(y_0,F)< r_f(y,F)$. Thus, our assertion is established.

$(c)$. Let $y\in\trC_Y^f(F)$. Then, by $(b)$, there exists $j\in\{1,\cdots,N\}$ such that $\|y-x_j\|\leq\trr_Y(F)+\e$. Then, for all $i=1,\cdots,N$, we have
\[\|y-x_i\|\leq\|y-x_j\|+\|x_j-x_i\|\leq\trr_Y(F)+diam(F)+\e.\]
This establishes our claim.
\end{proof}
Let $\mc{F}_N(X)$ denote the set of all finite subsets of $X$ with cardinality $N$. 
We define a metric $d$ on the set $\mc{F}_N(X)$ as follows:  for $F_1,F_2\in\mc{F}_N(X)$,
\[
d(F_1,F_2)=\underset{1\leq i\leq N}{\max}\|x_i-x_i'\|
\]
where $F_1=\{x_1,\cdots,x_N\},F_2=\{x_1',\cdots,x_N'\}$.
\begin{prop}
Let $Y$ be a subspace of $X$ and $f:\mb{R}^N_{\geq0}\ra\mb{R}_{\geq0}$ be a convex, monotone function. Then, the map $\emr_Y^f:(\mc{F}_N(X),d)\ra\mb{R}_{\geq0}$ is Lipschitz continuous on bounded subsets of $\mc{F}_N(X)$.
\end{prop}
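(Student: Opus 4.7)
The plan is to derive the Lipschitz bound by combining two ingredients already available in the paper: Proposition~\ref{P1}(a), which lets us replace the infimum defining $\trr_Y^f(F)$ by an infimum over a ball in $Y$ whose radius depends only on an a priori bound on the points of $F$; and the local Lipschitz property of convex functions on $\mb{R}^N_{\geq 0}$ on bounded subsets, which is the same fact already invoked implicitly in Proposition~\ref{P4}.

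First, I would fix a bounded family $\mc{B}\ci\mc{F}_N(X)$ and a constant $R>0$ with $\max_{1\leq i\leq N}\|x_i\|\leq R$ for every $\{x_1,\dots,x_N\}\in\mc{B}$. Since $0\in Y$ is admissible, $\trr_Y(F)\leq\max_i\|x_i\|\leq R$ for every such $F$, so fixing any $\e>0$ Proposition~\ref{P1}(a) upgrades to the $F$-independent representation
\[
\trr_Y^f(F)=\inf\{r_f(y,F):y\in S\},\qquad S=\{y\in Y:\|y\|\leq 2R+\e\},
\]
for every $F\in\mc{B}$.

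Next, for $y\in S$ and $F=\{x_1,\dots,x_N\}\in\mc{B}$ the tuple $(\|y-x_i\|)_{i=1}^N$ lies in the compact cube $K=[0,3R+\e]^N$. Let $L$ be a Lipschitz constant of $f|_K$ with respect to the $\ell_\iy$-norm. For $F_1=\{x_1,\dots,x_N\}$ and $F_2=\{x_1',\dots,x_N'\}$ in $\mc{B}$, the reverse triangle inequality gives $\bigl|\|y-x_i\|-\|y-x_i'\|\bigr|\leq\|x_i-x_i'\|$ for each $i$, hence
\[
|r_f(y,F_1)-r_f(y,F_2)|\leq L\max_{1\leq i\leq N}\|x_i-x_i'\|=L\,d(F_1,F_2)
\]
uniformly in $y\in S$. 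Rearranging to $r_f(y,F_1)\leq r_f(y,F_2)+L\,d(F_1,F_2)$ and taking the infimum over $y\in S$ on both sides, together with the uniform representation of $\trr_Y^f$ over $S$, yields $\trr_Y^f(F_1)\leq\trr_Y^f(F_2)+L\,d(F_1,F_2)$; symmetrizing in $F_1,F_2$ gives the claim on $\mc{B}$.

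The only genuinely delicate step is the first one: securing a single ball $S\ci Y$ that simultaneously captures the infimum for every $F\in\mc{B}$. Without this uniformity, the relevant cube $K$ containing $(\|y-x_i\|)_i$ would grow with $\|y\|$, and the Lipschitz constant $L$ needed in the second step could not be chosen uniformly in $y$, blocking the final $\inf_{y\in S}$ step. Proposition~\ref{P1}(a) is tailor-made for this reduction; once that $F$-independent bound on the effective set of candidate $y$'s is in hand, the remainder is the routine ``Lipschitz composed with a uniform continuity estimate'' argument.
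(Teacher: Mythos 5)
Your proof is correct and follows essentially the same route as the paper's: use Proposition~\ref{P1}(a) to restrict the infimum defining $\trr_Y^f$ to a single norm-bounded subset of $Y$ valid for all sets in the bounded family, then apply the local Lipschitz property of the convex function $f$ on the resulting cube and pass to the infimum. The only cosmetic difference is that the paper extracts the uniform bound $\max_i\|x_i\|\leq 2M+2R$ from $d$-boundedness via a fixed reference set $G\in B$, whereas you posit the uniform bound $R$ directly; these are equivalent.
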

\begin{proof}
Let $B$ be a closed and bounded subset of $\mc{F}_N(X)$. Now there exists  $M>0$  such that $d(F,F')\leq M$ for all $F,F'\in B$. Fix a set $G=\{z_1,\cdots z_N\}\in B$. Let $R>0$ be such that $\underset{1\leq i\leq N}{\max}\|z_i\|\leq R$.

{\sc Claim:} $\trr_Y^f$ is Lipschitz continuous on $B$.

Let $F=\{x_1,\cdots,x_N\}, F'=\{x_1',\cdots,x_N'\}\in B$,
Now, $\underset{1\leq i\leq N}{\max}\|x_i\|+\trr_Y(F)\leq 2\underset{1\leq i\leq N}{\max}\|x_i\|\leq 2\underset{1\leq i\leq N}{\max}\|x_i-z_i\|+2\underset{1\leq i\leq N}{\max}\|z_i\|\leq 2M+2R$, and from proposition \ref{P1}$(a)$, we have $\trr_Y^f(F)=\inf\{r_f(y,F):\|y\|\leq 2M+2R+\e\}$ for an $\e>0$. Thus, for any $H=\{s_1,\cdots,s_N\}\in B$ and any $y\in Y$ such that $\|y\|\leq 2M+2R+\e$, we have $\|y-s_i\|\leq 3M+3R+\e$ for all $i=1,\cdots,N$. For $\e>0$, let $L$ be the Lipschitz constant of $f$ on $[0,3M+3R+\e]^N$. Then, for  $y\in Y$ with $\|y\|\leq 2M+2R+\e$, we have
\begin{eqnarray}\label{E1}
\begin{split}
|f(\|y-x_1\|,\cdots,\|y-x_N\|)-f(\|y-x_1'\|,\cdots,\|y-x_N'\|)|\\
\leq L\underset{1\leq i\leq N}{\max}|\|y-x_i\|-\|y-x_i'\||
\leq L\underset{1\leq i\leq N}{\max}\|x_i-x_i'\|.
\end{split}
\end{eqnarray}
Thus,
\beqa
\trr_Y^f(F)&\leq& f(\|y-x_1\|,\cdots,\|y-x_N\|)\\
&\leq&f(\|y-x_1'\|,\cdots,\|y-x_N'\|)+L\underset{1\leq i\leq N}{\max}\|x_i-x_i'\|.
\eeqa
Hence $\trr_Y^f(F)\leq\trr_Y^f(F')+L\underset{1\leq i\leq N}{\max}\|x_i-x_i'\|$. Similarly, by interchanging $F$ and $F'$ in (\ref{E1}), we have $|\trr_Y^f(F)-\trr_Y^f(F')|\leq L\underset{1\leq i\leq N}{\max}\|x_i-x_i'\|=Ld(F,F')$. Hence, our assertion follows.
\end{proof}
We now recall the notion of upper semicontinuity for set-valued maps.
\bdfn
Let $T$ be a topological space and $F:T\ra \mc{CB}(X)$ be a set-valued map. Then, $F$ is said to be {\it upper semicontinuous} if, for any $t_0\in T$ and any open set $W$ containing $F(t_0)$, there exists an open set $U$ containing $t_0$ such that $F(t)\ci W$ for all $t\in U$.
\edfn
\begin{thm}
Let $Y$ be a subspace of $X$ and $(X,Y,\mc{F}(X))$ has $\tau$-$\mr{F}_{cwm}$-$\eSACP$. Then, for $N\in\mb{N}$ and $f:\mb{R}^N_{\geq0}\ra\mb{R}_{\geq0}$ in $\mr{F}_{cwm}$, the map $\emC_Y^f:(\mc{F}_N(X),d)\ra\mc{CB}(Y)$ is metric-$\tau$ upper semicontinuous. 
\end{thm}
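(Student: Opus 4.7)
The plan is to run a standard contradiction argument exploiting $\tau$-$\mr{F}_{cwm}$-SACP at the limiting set. Fix $F_0=\{x_1,\ldots,x_N\}\in\mc{F}_N(X)$ and suppose, towards a contradiction, that there is a $\tau$-open set $W\supseteq\trC_Y^f(F_0)$ and a sequence $F_n=\{x_1^{(n)},\ldots,x_N^{(n)}\}\in\mc{F}_N(X)$ with $d(F_n,F_0)\ra 0$ and points $y_n\in\trC_Y^f(F_n)\sm W$. The target is to extract a $\tau$-convergent subsequence of $(y_n)$ whose limit lies in $\trC_Y^f(F_0)\ci W$, forcing eventual membership in $W$, contradicting $y_n\notin W$.

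First I would show $(y_n)$ is bounded. Since $d(F_n,F_0)\ra0$, the sets $F_n$ are uniformly bounded and their diameters are uniformly bounded; also $\trr_Y(F_n)\leq\max_i\|x_i^{(n)}\|$ is uniformly bounded. Because $f\in\mr{F}_{cwm}$ is weakly strictly monotone, Proposition \ref{P1}(c) (applied with some fixed $\e>0$) yields $\trC_Y^f(F_n)\ci\cap_{i=1}^N B[x_i^{(n)},\trr_Y(F_n)+\mathrm{diam}(F_n)+\e]\cap Y$, so $(y_n)$ lies in a common ball.

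Next I would verify $r_f(y_n,F_0)\ra\trr_Y^f(F_0)$. By the previous proposition, $\trr_Y^f$ is Lipschitz on bounded subsets of $(\mc{F}_N(X),d)$, hence $\trr_Y^f(F_n)\ra\trr_Y^f(F_0)$. For the evaluation comparison, the vectors $(\|y_n-x_i^{(n)}\|)_i$ and $(\|y_n-x_i\|)_i$ both lie in a common box $[0,R]^N$ for some $R>0$; on this box the convex function $f$ is Lipschitz with some constant $L$, so
\[
|r_f(y_n,F_n)-r_f(y_n,F_0)|\leq L\max_{1\leq i\leq N}\bigl|\|y_n-x_i^{(n)}\|-\|y_n-x_i\|\bigr|\leq L\,d(F_n,F_0)\ra 0.
\]
Combined with $r_f(y_n,F_n)=\trr_Y^f(F_n)\ra\trr_Y^f(F_0)$, this gives the claim.

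Now I can invoke $\tau$-$\mr{F}_{cwm}$-$\eSACP$ for the single set $F_0$: the sequence $(y_n)\ci Y$ satisfies $r_f(y_n,F_0)\ra\trr_Y^f(F_0)$, so it admits a $\tau$-convergent subsequence $y_{n_k}\xrightarrow{\tau}y^*\in Y$ (using that $Y$ is $\tau$-closed: a subspace is norm closed and, being convex, weakly closed as well). By $\tau$-lower semicontinuity of $r_f(\cdot,F_0)$ \cite[Proposition~1.4(iv)]{LV},
\[
\trr_Y^f(F_0)\leq r_f(y^*,F_0)\leq\liminf_k r_f(y_{n_k},F_0)=\trr_Y^f(F_0),
\]
so $y^*\in\trC_Y^f(F_0)\ci W$. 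Since $W$ is $\tau$-open and $y_{n_k}\xrightarrow{\tau}y^*\in W$, eventually $y_{n_k}\in W$, contradicting $y_n\notin W$. The main subtlety I would watch is the uniform bound needed to apply the Lipschitz property of $f$ and of $r_f(\cdot,F)$ across the varying sets $F_n$; once the uniform box $[0,R]^N$ is identified from Proposition \ref{P1}, the rest is routine.
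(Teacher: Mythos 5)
Your proposal is correct and follows essentially the same route as the paper: the same contradiction setup, the same use of Proposition \ref{P1}(c) to bound $(y_n)$, the same splitting of $r_f(y_n,F_0)-\trr_Y^f(F_0)$ into a radius term and an evaluation term controlled by Lipschitz constants on a uniform box, and the same appeal to $\tau$-$\mr{F}_{cwm}$-SACP at $F_0$ to extract the contradictory subsequence. Your explicit invocation of $\tau$-lower semicontinuity of $r_f(\cdot,F_0)$ to place the limit in $\trC_Y^f(F_0)$ is a slightly more careful justification of a step the paper states as ``clear,'' but it is not a different argument.
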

\begin{proof}
Suppose that $\trC_Y^f$ is not metric-$\tau$ upper semicontinuous at $F=\{x_1,\cdots,x_N\}\in \mc{F}_N(X)$. Then, there exists a $\tau$-open set $W$ containing $\trC_Y^f(F)$ and a sequence $(F_n)\ci \mc{F}_N(X)$ such that $F_n\ra F$ but $\trC_Y^f(F_n)\sm W\neq\es$ for all $n\in\mb{N}$. 

Let $F_n=\{x_1^n,\cdots,x_N^n\}$ for all $n\in\mb{N}$. Let $r=\trr_Y^f(F), r_n=\trr_Y^f(F_n),\vp(x)=r_f(x,F)$ and $\vp_n(x)=r_f(x,F_n)$ for all $x\in X$ and $n\in\mb{N}$. Let us choose $M>0$ such that for all $n\in\mb{N}$, $\underset{1\leq i\leq N}{\max}\|x_i^n\|\leq M$ and $\underset{1\leq i\leq N}{\max}\|x_i\|\leq M$. Let $y_n\in\trC_Y^f(F_n)\sm W$ for all $n\in\mb{N}$ and $\e>0$. Then, from proposition \ref{P1}$(c)$, we have $\|y_n-x_i^n\|\leq \trr_Y(F_n)+diam(F_n)+\e\leq 3M+\e$, and hence, $\|y_n-x_i\|\leq\|y_n-x_i^n\|+\|x_i^n\|+\|x_i\|\leq 5M+\e$ for all $i=1,\cdots,N$. Let $L$ be the Lipschitz constant for $f$ on $[0,5M+\e]^N$ and that for $\trr_Y^f$ on $\{\{y_1,\cdots,y_N\}\in \mc{F}_N(X):\underset{1\leq i\leq N}{\max}\|y_i\|\leq M\}$. Then, we have
\[
r\leq \vp(y_n)=r+(r_n-r)+(\vp(y_n)-\vp_n(y_n))
\leq r+2L\sum_{i=1}^N\|x_i^n-x_i\|.\]

Thus, $\vp(y_n)\ra r$. By our assumption, $(y_n)$ has a $\tau$-convergent subsequence $(y_{n_k})$ converging to $y\in Y$ in $\tau$-topology. Clearly, $y\in\trC_Y^f(F)\ci W$ but $W$ is a $\tau$-open set, and $y_n\notin W$ for all $n\in\mb{N}$. This contradiction proves our assertion.
\end{proof}
\begin{rem}
It has been observed in \cite{DK} that there exists a one dimensional subspace $Y$ of a finite dimensional Banach space $X$ such that the metric projection $x\mapsto P_Y(x)$ is not continuous. Since $Y$ is finite dimensional, $(X,Y,\mc{F}(X))$ has $\mr{F}_{cmc}$-$\eSACP$. However, the map $x\mapsto \emC_Y(\{x\})=P_Y(x)$ is not continuous. 
\end{rem}

\end{document}